\newtheorem{theorem}{Theorem}[section]
\newtheorem{lemma}[theorem]{Lemma}
\newtheorem{proposition}[theorem]{Proposition}
\newtheorem{corollary}[theorem]{Corollary}
\newtheorem{remark}{Remark}
\newtheorem{problem}{Problem}
\newenvironment{proof}{\prepf\rm}{\endprepf}
\def\0{\Gamma_{\mathbb{Z}_{n},H}}
\def\c{\o{\Gamma}_{G,H}}
\def\1{\Gamma_{G,H}}
\def\2{\Gamma_{G}}
\def\3{\Gamma_{H}}
\def\4{\O_{|G|,|x|}}
\def\5{\O_{|H|,|x|}}
\def\a{\alpha}
\def\G{\Gamma}
\def\t{\text}
\def\sm{\setminus}
\def\se{\subseteq}
\def\s{\subset}
\def\O{\Omega}
\def\o{\overline}
\def\T{\theta}
\def\OT{\o{\T}}
\def\se{\subseteq}
\def\s{\subset}
\newcommand{\Aut}{\mathop{\mathrm{Aut}}}
\newcommand{\tc}{\mathop{\mathrm{TC}}}
\begin{document}
\title{Generalized non-coprime graphs of groups}
\author{S. Anukumar Kathirvel\\
{\small Department of Mathematics}\\
{\small Amrita College of Engineering and Technology}\\ 
{\small Kanyakumari 629901, Tamil Nadu, India}\\
{\small ORCID:0000-0002-1286-1444.}\\
Peter J. Cameron \\
{\small School of Mathematics and Statistics}\\
{\small University of St Andrews, North Haugh St Andrews}\\ 
{\small  Fife, KY16 9SS, U.K.}\\
{\small Email:~pjc20@st-andrews.ac.uk}\\ 
{\small  ORCID:0000-0003-3130-9505.} \\
and\\
T. Tamizh Chelvam\\
{\small Department of Mathematics}\\
{\small  Manonmaniam Sundaranar University}\\
{\small Tirunelveli 627012, Tamil Nadu, India}\\
{\small Email:~tamche59@gmail.com}\\
{\small  ORCID:0000-0002-1878-7847.}}
\date{August 2022}
\maketitle

 \begin{abstract}Let $G$  be a finite group with identity $e$ and \( H \neq \{e\}\) be a  subgroup of $G.$ The generalized non-coprime graph $\1$ of  \( G \) with respect to \(H\) is the simple undirected graph with \(G  \sm \{e \}\) as the vertex set and two distinct vertices \( a \) and \( b\) are adjacent if and only if $\gcd(|a|,|b|) \neq 1 $ and either $a \in H$  or $b \in H$, where $|a|$ is the order of $a\in G.$  In this paper, we study certain graph theoretical properties of generalized non-coprime graphs of finite groups, concentrating on cyclic groups. More specifically, we obtain necessary and sufficient conditions for the generalized non-coprime graph of a cyclic group to be in the class of stars, paths, cycles, triangle-free, complete bipartite, complete, unicycle, split, claw-free, chordal or perfect graphs. Then we show that widening the class of groups to all finite nilpotent groups gives us no new graphs, but we give as an example of contrasting behaviour the class of EPPO groups (those in which all elements have prime power order). We conclude with a connection to the Gruenberg--Kegel graph.
\end{abstract}
\section{Introduction}
Throughout this paper $G$ is a finite group with identity $e$. One can associate a graph to $G$ in many different ways, and there are several studies on graphs from groups. Several authors studied about graphs like Cayley graphs, power graphs, commuting graphs and others~\cite{CS,PRT,GR,MGA,RPTT19,RPTT20,SS,SW,TM}. In fact, the first ever construction of a graph from a group starts from the well known graph construction of Cayley graphs from finite groups~\cite{BS,DGMS,EP,GSL,KS,TTSM1,TTSM2,TTSR3,TTMS}. Since the order of an element in a group is one of the most basic parameters in group theory, Ma \textit{et~al.}~\cite{MWY} defined the coprime graph $\Gamma_G$ of $G$ using orders of elements in $G$. The coprime graph $\Gamma_G$ of $G$ is the simple undirected graph with vertex set $G$ two distinct vertices $x$ and $y$ are adjacent in $\Gamma_G$ if and only if $\gcd(|x|,|y|) = 1$. Ma \textit{et~al.}~\cite{MWY} obtained a necessary and sufficient condition for $\Gamma_G$ to have $\Aut(\Gamma_G)\cong\Aut(G)$ where $\Aut(\2)$ and $\Aut(G)$ are the automorphism groups of $\2$ and $G$ respectively. Dorbidi~\cite{GR} obtained a necessary and sufficient condition for $\Gamma_G$ to be planar. Further Selvakumar \textit{et~al.}~\cite{SS} classified all finite groups whose coprime graphs are toroidal or projective.   In \cite{MMAT}, the authors defined a new graph called the generalized non-coprime graph, and proved that the chromatic number and the clique number of the generalized coprime graph are equal for certain groups. 

The non-coprime graph $\Gamma_G$ of $G$ is the simple undirected graph with vertex set $G \sm\{e\}$ and two distinct vertices are adjacent whenever their orders are not relatively prime. Mansoori \textit{et~al.}~\cite{MET} obtained a necessary and sufficient condition for the non-coprime graph of a finite group to be planar or regular. Subsequently the notion of non-coprime graph was generalized with respect to a subgroup $H$ of $G$. In detail, Gholamreza \textit{et~al.}~\cite{GAA} defined the generalized non-coprime graph $\1$ of $G$ with respect to a subgroup $H$ of $G$ as the simple undirected graph with vertex set $G\sm\{e\}$ and  two distinct vertices are adjacent whenever their orders are not relatively prime and at least one of them is an element of $H$. In the same paper, the authors obtained a necessary and sufficient condition for the graph $\1$ to be planar or to contain a perfect matching.  

Let $\G=(V,E)$ be a graph. We say that $\G$ is connected if there is a path between any two distinct vertices of $\G$, otherwise $\G$ is disconnected. At the other extreme, we say that $\G$ is totally disconnected if no two vertices of $\G$ are adjacent. For two distinct vertices $x$ and $y$ of $\G$, the length of a shortest path from $x$ to $y$ is denoted by $d(x,y)$ if a path from $x$ to $y$ exists. Also we define $d(x,x)=0$ and $d(x,y)=\infty$ if there exists no path between distinct vertices $x$ and $y$. The {\it diameter} of $\G$ is defined as $diam(\G)=\sup\{d(x,y) : x,y\in V(\G)\}.$ The \textit{girth} of $\G$, denoted by $gr(\G)$, is the length of a smallest cycle in $\G,$ whereas $gr(\G)=\infty$ if $\G$ contains no cycles. The \textit {complement} $\overline{\G}$ of $\G$ is the graph whose vertex set is $V(\G)$ and such that for a pair $u,v$ of distinct vertices of $\G$, $uv$ is an edge of $\overline{\G}$ if and only if $uv$ is not an edge of $\G.$ Graphs $\G$ and $\G'$ are said to be \textit{isomorphic} to one another, written $\G\cong \G'$, if there exists a one-to-one correspondence $f:V (\G) \rightarrow V (\G')$ such that for each pair $x,y$ of vertices of $\G$, we have $xy \in E(\G)$ if and only if $f(x)f(y) \in E(\G')$. The graphs $K_n$, $K_{m,n},$ $P_n$ and $C_n$ denote respectively the complete graph on $n(\geq 1)$ vertices, the complete bipartite graph with vertex partition of subsets of  cardinality $m(\geq 1)$ and $n(\geq 1),$ the path with $n$ vertices and the cycle with $n$ vertices respectively. For a subset $H$ of the vertex set $V$ of a graph $\Gamma,$ $\langle H\rangle$ denotes the subgraph induced by $H.$ In fact $\langle H\rangle$ is a graph with vertex set $H$ and two distinct vertices in $H$ are if they are already adjacent in $\Gamma.$ For basic definitions in graph theory, we refer to Chartrand \textit{et~al.}~\cite{CHA06}. 

Throughout this paper, \(G\) is a finite group, and \(H\neq \{e\}\) a subgroup of \(G\). For an element  $x\in G$,  $|x|$ denotes the order of  $x$.  For a finite group $G$ of order $n=p_1^{\a_1}p_2^{\a_2} \ldots p_r^{\a_r}$, where $p_1,\ldots,p_r$ are distinct primes and $a_1,\ldots,a_r$ positive integers, and $x\in G,$ we let $\T_x=\{ p_i \in \{p_1,\ldots,p_r \}: p_i \mid |x|\} $ and $\OT_x= \{p_1,\ldots,p_r \} \sm \T_x .$ For a subgroup $H$ of $G,$ we take $\T_H=\{ p_i \in \{p_1,\ldots,p_r \}: p_i \mid |H|\} $ and $\OT_H= \{p_1,\ldots,p_r \} \sm \T_H .$  For a positive integer $n,$ $\phi(n)$ is the Euler function of $n$ and $ \tau(n) $ denotes the number of positive divisors of $n.$ For two integers $a$ and $b,$ $(a,b)$ denotes the greatest common divisor of $a$ and $b.$  For positive integers $n$ and  $h,$ we denote $\O_{n,h}=\{d : d\mid n,~ d>1  \text{~and~}~(d,h) > 1 \}$ and  $\o{\O}_{n,h}=\{d :d\mid n,~d>1\text{~and~}~(d,h) =1\}$. Further group-theoretic notation will be introduced as required. We caution the reader that, if $X$ is a subset of a group $G$, then $\langle X\rangle$ means the subgraph of $\Gamma_{G,H}$ induced on $X$ and not the subgroup generated by $X$.

The outline of this paper is as follows. In section 2, which is the heart of  the paper, we consider the case where $G$ is a cyclic group. We first obtain the degree of all vertices of the generalized non-coprime graph $\1$ of any finite cyclic group $G$ and a subgroup $H\neq \{e\}$ of $G.$ This in turn gives the maximum and minimum degrees of the graph $\1.$ Further, we provide a characterization for $\1$ to be connected and also prove that $\1$ is not Eulerian.  In section 3,  we provide  characterizations  for the graph $\1$ to be in the class of stars, paths, cycles, complete bipartite graphs,  complete graphs,  unicyclic graphs,  split graphs,  claw-free graphs and  chordal graphs. In the final part, we obtain a characterization for $\1$ to be a perfect graph. 

The choice of cyclic groups is not as special as it appears. In the third section, we widen the class to include all finite nilpotent groups, and find that no new graphs are obtained; every generalized non-coprime graph of a finite nilpotent group is isomorphic to the generalized non-coprime graph of a cyclic group. In the fourth section we treat a contrasting class, that of EPPO groups 
(groups where every element has prime power order). We completely describe the graphs that arise, and find that they are not particularly interesting. The final section links our results to the Gruenberg--Kegel graph of the group (a much smaller graph whose vertices are the prime divisors of $|G|$).

\section{Finite cyclic groups}
\subsection{Basics of the generalized non-coprime graph}
In this section, we obtain the degree of all vertices of the generalized non-coprime graph $\1$ for any finite cyclic group $G$ with respect to the subgroup $H\neq \{e\}.$  Hence we find the maximum and minimum degrees of $\1.$ Further, we obtain the characterization for the graph $\1$ to be connected. In the following lemma, we observe certain basic properties of the graph $\1$  and the same are useful in the proofs of subsequent results of this paper.

\begin{lemma}\label{1}	
Let \(G\) be a finite cyclic group of order $n=p_1^{\a_1}p_2^{\a_2} \ldots p_r^{\a_r}\geq 3$ where $p_i's$ are distinct primes and $\a_i's$ are non-negative integers for $1\leq i\leq r.$ Let $\1$ be the generalized non-coprime graph of $G$ with respect to a subgroup $\{e\}\neq H$ of $G.$ For $x\in G$, let $\T_x=\{ p_i \in \{p_1,\ldots,p_r \}: p_i \mid |x|\} $ Assume that $ x,~ y \in G \sm \{e \}$ and  $x \neq y.$ Then the following are true.
\begin{itemize}
\item [\rm (i)]  The vertices $x$ and $y$ are adjacent in $\1$ if and only if either $x \in H$ or $y \in H$ with $ \T_x \cap \T_y \neq \emptyset$. 
\item [\rm (ii)]  If $|x|=|y|$, then $\deg(x)=\deg(y)$ in $\1$.
\item [\rm (iii)]  Let $x,y\in G$ be such that $\T_x = \T_y$ and either $x,y \in H$ or $x,y \in G\sm H.$ Then $\deg(x)=\deg(y)$ in $\1$.
\item [\rm (iv)]  If $x \in H$ and $y \in G\sm H$ with $\T_x = \T_y,$ then $\deg(x) > \deg(y)$ in $\1$.
\end{itemize}
\end{lemma}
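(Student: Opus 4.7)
Part (i) is essentially a reformulation of the definition of $\1$: the condition $\gcd(|x|,|y|)\neq 1$ says some prime divides both $|x|$ and $|y|$, which is exactly $\T_x\cap\T_y\neq\emptyset$, and the requirement that one of $x,y$ lie in $H$ is identical in both formulations. So (i) is immediate.

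For parts (ii) and (iii), the plan rests on the basic structural fact that the finite cyclic group $G$ has a unique subgroup of each order dividing $n$. In particular, $H$ is the unique subgroup of order $|H|$, so membership in $H$ depends only on the order of the element: $z\in H$ if and only if $|z|$ divides $|H|$. If $|x|=|y|$ then this immediately yields $\T_x=\T_y$ and forces $x,y$ to be on the same side of $H$, reducing (ii) to (iii). For (iii) I would read the neighbourhoods off from (i). When $x,y\in H$, the set $S=\{z\in G\sm\{e\}:\T_z\cap\T_x\neq\emptyset\}$ satisfies $N(x)=S\sm\{x\}$ and $N(y)=S\sm\{y\}$; both $x,y\in S$ since $\T_x=\T_y\neq\emptyset$, so $\deg(x)=|S|-1=\deg(y)$. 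When $x,y\in G\sm H$, the two neighbourhoods literally coincide with the common set $\{z\in H\sm\{e\}:\T_z\cap\T_x\neq\emptyset\}$, since $x,y$ themselves lie outside $H$ and so never need to be excluded.

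Part (iv) is the real content. For $x\in H$ and $y\in G\sm H$ with $\T_x=\T_y$, unpacking (i) gives $N(y)\sm N(x)=\{x\}$, while $N(x)\sm N(y)\supseteq\{z\in G\sm H:\T_z\cap\T_x\neq\emptyset\}$. So it is enough to exhibit two distinct elements in the latter set: the element $y$ is one, and $y^{-1}$ is a natural second candidate, lying in $G\sm H$, satisfying $\T_{y^{-1}}=\T_y$, and distinct from $y$ as long as $|y|\geq 3$. The only real obstacle is to rule out $|y|=2$. But in that case $\T_y=\{2\}$ would make $y$ the unique element of order $2$ in the cyclic group $G$, and $y\notin H$ would then force $|H|$ to be odd; combining this with $\T_x=\{2\}$ (so $|x|$ is a power of $2$) and the fact that $|x|$ divides $|H|$, we would get $|x|=1$, contradicting $x\neq e$. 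Hence $|y|\geq 3$ always holds, $y^{-1}$ provides the required second element of $N(x)\sm N(y)$, and the strict inequality $\deg(x)>\deg(y)$ follows.
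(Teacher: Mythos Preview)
Your argument is correct throughout. Parts (i)--(iii) follow essentially the same route as the paper: the adjacency criterion is a restatement of the definition, and for (ii)--(iii) both you and the paper use that in a cyclic group membership in $H$ is determined by the order of the element, so that $\T_x=\T_y$ together with ``same side of $H$'' forces equal neighbourhoods. Your treatment is a bit more explicit (writing $N(x)=S\setminus\{x\}$, $N(y)=S\setminus\{y\}$ in the $H$-case and observing literal equality of neighbourhoods in the $G\setminus H$-case), but the idea is identical.

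Part (iv) is where you genuinely differ from, and in fact improve on, the paper. The paper argues that every neighbour of $y$ is a neighbour of $x$ and then exhibits a single $z\in G\setminus H$ with $\T_z\cap\T_x\neq\emptyset$; but this overlooks that $x$ itself lies in $N(y)\setminus N(x)$, so a single extra neighbour of $x$ only recovers $\deg(x)\geq\deg(y)$, not the strict inequality. Your bookkeeping --- computing $N(y)\setminus N(x)=\{x\}$ exactly and then producing \emph{two} witnesses $y$ and $y^{-1}$ in $N(x)\setminus N(y)$ --- closes this gap cleanly, and the short argument ruling out $|y|=2$ is correct. As an alternative to the $y^{-1}$ trick that avoids the case split: since $H\neq G$ (as $y\notin H$) and $|G|\geq 3$, the $\phi(|G|)\geq 2$ generators of $G$ all lie in $G\setminus H$ and have full $\theta$-set, so they already supply two elements of $N(x)\setminus N(y)$.
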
  
\begin{proof}
(i)  The proof follows from the definition of the graph $\1.$
	
(ii)  For $x,y\in G, $ $|x|=|y|$ implies that $\theta_x=\theta_y$.  Suppose $z \in G\sm \{e,x,y \}$ and $z$ is adjacent to $x$ in $\1$.  Since $H$ is a subgroup of the cyclic group $G,$ we have $H$ is the only subgroup of order $|H|$ in $G.$ Hence either $x,y \in H$ or $x,y \in G\sm H.$ This, along with $|x|=|y|$ and $\gcd(|x|,|z|)>1$, gives that $z$ is adjacent to $y$. This implies that $\deg(x)=\deg(y)$ in $\1$.

(iii)  Suppose $ z \in  G \sm \{e,x,y\}$ such that $ z $ and $ x $ are adjacent in $\1$. By above (i), $\T_x \cap \T_z \neq \emptyset$ and so $ \T_z \cap \T_y \neq \emptyset$. Since either $ x,y \in H$ or $ x,y \in G \sm H, $ we have $ z $ and $ y $ are adjacent in $ \1$. This implies that $N(x)\subseteq N(y).$ By reverting the argument, we get that  $N(y)\subseteq N(x)$. Hence $\deg(x) = \deg(y)$ in ${\1}$. 
	
(iv)  Suppose $z$ is adjacent with $y$ in $\1$. This gives that  $\T_z\cap \T_y\neq \emptyset.$ Since $y \in G\sm H,$ $z \in H$ and $\T_z\cap \T_x\neq \emptyset$. This implies  that $z$ is adjacent to $x$. Thus, any element adjacent to $y$ is also adjacent to $x$ in $\1$.  Hence $\deg(x) \geq \deg(y)$ in $\1$. Suppose $z\in G \sm H$ and $\T_z \cap \T_x \neq \emptyset$. By (i), $x$ and $z$ are adjacent in $\1$. But $y,z \in G \sm H$ implies that $y$ and $z$ are not adjacent in $\1$. This gives that $\deg(x) > \deg(y)$ in $\1$.	
\end{proof}

In the following lemma, we obtain degrees of all the vertices of the generalized non-coprime graph $\1$ of a finite cyclic group $G$ with respect to a subgroup $H$ of $G.$
\begin{lemma}\label{2} Let \(G\) be a finite cyclic group of order $n=p_1^{\a_1}p_2^{\a_2} \ldots p_r^{\a_r}\geq 3$ where $p_i$ are distinct primes and $\a_i$ are non-negative integers for $1\leq i\leq r$. Let  $\1$ be the generalized non-coprime graph of $G$ with respect to a subgroup $\{e\}\neq H$. For positive integers $n$ and  $h$, let
\[\O_{n,h}=\{d : d \mid n,~ d>1  \text{~and~}~(d,h) > 1 \}.\]
Then, for  \(x \in G \sm \{e\} \),
\[\deg_{\1}(x) =\begin{cases}
\sum\limits_{d \in \O_{|G|,|x|}} \phi(d) - 1 \qquad & \text{~if~} x  \in H;\\
\sum\limits_{d \in \O_{|H|,|x|}} \phi(d) \qquad & \text{~if~}  x \in G \sm H.
\end{cases}\]
\end{lemma}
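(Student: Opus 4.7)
The plan is a direct counting argument, using two structural facts about cyclic groups: first, that a cyclic group $G$ contains a unique subgroup of order $|H|$, namely $H$ itself; second, that in any cyclic group of order $m$, the number of elements of order $d$ equals $\phi(d)$ for each $d\mid m$. Combined with the adjacency criterion from Lemma~\ref{1}(i), this reduces the degree computation to counting divisors of the relevant order.

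I would split into the two cases of the formula. \textbf{Case 1:} $x\in H$. Since $x\in H$, by Lemma~\ref{1}(i) the neighbours of $x$ in $\1$ are exactly the elements $y\in G\sm\{e,x\}$ with $\gcd(|x|,|y|)>1$. Grouping these $y$ by order: for each divisor $d$ of $n=|G|$ with $d>1$ and $\gcd(d,|x|)>1$, i.e.\ each $d\in\O_{|G|,|x|}$, there are exactly $\phi(d)$ elements $y\in G$ with $|y|=d$. Summing gives $\sum_{d\in\O_{|G|,|x|}}\phi(d)$ candidate neighbours, but this count includes $x$ itself, since $|x|\in \O_{|G|,|x|}$ (as $|x|>1$ and $\gcd(|x|,|x|)=|x|>1$). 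Subtracting $1$ for $x$ yields the claimed formula.

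\textbf{Case 2:} $x\in G\sm H$. Again by Lemma~\ref{1}(i), a neighbour $y$ of $x$ must satisfy $y\in H$ (since $x\notin H$) and $\gcd(|x|,|y|)>1$. So the neighbours lie in $H\sm\{e\}$, and $x$ itself is automatically excluded. Since $H$ is cyclic of order $|H|$, for each divisor $d$ of $|H|$ there are exactly $\phi(d)$ elements of $H$ of order $d$; restricting to $d>1$ and $\gcd(d,|x|)>1$, i.e.\ to $d\in \O_{|H|,|x|}$, gives the count $\sum_{d\in\O_{|H|,|x|}}\phi(d)$, with no correction needed.

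The proof is essentially bookkeeping; the only subtle points are to remember the uniqueness of $H$ as a subgroup of its order (so that membership in $H$ is determined by order alone when relevant), and to handle the "$-1$" correction in Case~1 without applying it in Case~2 where $x$ is not a candidate neighbour to begin with. I don't anticipate any real obstacle beyond cleanly writing these two counts.
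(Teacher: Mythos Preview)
Your proposal is correct and follows essentially the same route as the paper: both split into the two cases $x\in H$ and $x\in G\sm H$, invoke the adjacency criterion, count neighbours by grouping them according to their order using the $\phi(d)$-count in a cyclic group, and subtract $1$ in the first case to exclude $x$ itself. Your write-up is in fact slightly more explicit than the paper's about why the $-1$ appears and why no correction is needed in Case~2.
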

\begin{proof}
Suppose $x \in H$ and  $S=\{y \in G \sm \{e,x \} : |y| \in \4 \}$.  By definition,  $x$ and $ y $ are adjacent in $ \1$ if and only if  $ y \in S$ and so $\deg_{\1}(x) =|S|.$  Note that, if $m$ is a divisor of $|G|=n$, then $G$ contains exactly $ \phi(m) $ elements of order $m$.  This implies that 
\[|S|= \sum\limits_{d \in \4} \phi(d) - 1.\]

Suppose $ x\in G\setminus H$ and $S=\{y\in H : |y| \in \5 \}$.  By definition, $x$ and $ y $ are adjacent in $ \1$ for every $ y \in S$ and so $\deg_{\1}(x) = |S|$.  Note that $ H $ is a cyclic subgroup of $ G $ and so $ H $ contains exactly $ \phi(m) $ elements of order $ m$ where $ m$ divides $ |H|$. This implies that
\[|S|= \sum\limits_{d \in \5} \phi(d).\]
\end{proof}

In the following lemma, we find the maximum degree of the graph $\1$.
\begin{lemma}\label{3} Let \(G\) be a finite cyclic group of order $n=p_1^{\a_1}p_2^{\a_2}\ldots p_r^{\a_r}\geq 3$ where $p_i$ are distinct primes and $\a_i$ are non-negative integers for $1\leq i\leq r$. Let \(H \neq \{e\}\) be a subgroup of $G$ of order $h$. Let $\1$ be the generalized non-coprime graph of $G$ with respect to a subgroup $H$ of $G$ and let $\o{\O}_{n,h}=\{d :  d\mid n,d>1\text{~and~}~(d,h)=1\}.$  Then 
\[\Delta(\1) = \begin{cases}
|G|-2 & \t{~if~} p_i ~\mid ~ |H| \t{~for~every~} i(1 \leq i \leq r);\\
|G|-(\sum\limits_{d \in \o\O_{n,h}} \phi(d) + 1) & \t{~if~} p_i~\nmid~|H| \t{~for~some~}i (1 \leq i \leq r). 
\end{cases}\]
\end{lemma}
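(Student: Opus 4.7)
The plan is to use Lemma~\ref{2}, which expresses $\deg_{\1}(x)$ in closed form according as $x\in H$ or $x\in G\sm H$. Lemma~\ref{1}(iii) shows that this degree depends only on the set $\T_x$, so within each of the two vertex classes the search for a maximum reduces to choosing an optimal $\T_x$; afterwards I will compare the two candidate maxima. Throughout I will use the classical identity $\sum_{d\mid n,\,d>1}\phi(d)=n-1$ to convert between sums over $\O_{n,h}$ and $\o{\O}_{n,h}$.

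In the first case, every $p_i$ divides $h$. Since $H$ is cyclic of order $h$, it contains an element $x$ with $|x|=p_1p_2\cdots p_r$; for such $x$ every divisor $d>1$ of $n$ has a prime factor in common with $|x|$, so $\O_{|G|,|x|}=\{d:d\mid n,\,d>1\}$, and Lemma~\ref{2} yields $\deg_{\1}(x)=(|G|-1)-1=|G|-2$. Since $\1$ has only $|G|-1$ vertices, this is the largest possible degree and settles the case.

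In the second case some $p_i$ fails to divide $h$. For $x\in H$ the constraint $|x|\mid h$ forces $\T_x\se\T_H$, and equality $\T_x=\T_H$ is realized by choosing an element of $H$ of order $\prod_{p_j\in\T_H}p_j$. For such $x$, $\O_{|G|,|x|}=\O_{n,h}$ and the largest degree in $H$ equals $\sum_{d\in\O_{n,h}}\phi(d)-1$. For $y\in G\sm H$, Lemma~\ref{2} shows the degree is largest when $\T_H\se\T_y$, in which case every divisor of $h$ greater than $1$ appears in $\O_{|H|,|y|}$, giving $\deg_{\1}(y)=\sum_{d\mid h,\,d>1}\phi(d)=h-1$.

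The only non-routine step is the comparison: one must verify that $\sum_{d\in\O_{n,h}}\phi(d)-1\ge h-1$. The inclusion $\{d:d\mid h,\,d>1\}\se\O_{n,h}$ gives $\sum_{d\in\O_{n,h}}\phi(d)\ge h-1$, and taking any $p_j\in\T_H$ (available because $H\ne\{e\}$) together with the $p_i\notin\T_H$ guaranteed by the Case~2 hypothesis, the integer $p_ip_j$ lies in $\O_{n,h}\sm\{d:d\mid h\}$, making the inclusion strict. Hence the maximum in Case~2 is $\sum_{d\in\O_{n,h}}\phi(d)-1$, which the divisor-sum identity rewrites in terms of $\o{\O}_{n,h}$ to give the stated expression. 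The main obstacle is precisely this strict-inequality comparison; everything else is direct bookkeeping with Lemma~\ref{2}.
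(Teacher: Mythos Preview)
Your argument is sound up to the last sentence, and it takes a somewhat different route from the paper's. In the second case the paper works directly with the graph: it identifies the set $S=\{x\in G:|x|\in\o\O_{n,h}\}$ of isolated vertices, notes that no vertex can be joined to any element of $S$, and then exhibits an element $x\in H$ with $|x|=|H|$ that is adjacent to everything in $T=G\sm(S\cup\{e\})$ except itself. You instead apply Lemma~\ref{2} systematically, maximising separately over $H$ and over $G\sm H$ and then comparing the two candidates. Your explicit comparison via the strict inclusion $\{d:d\mid h,\,d>1\}\subsetneq\O_{n,h}$ is a point the paper's isolated-vertex argument handles only implicitly.

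However, your final clause contains an arithmetic slip. The divisor-sum identity actually gives
\[
\sum_{d\in\O_{n,h}}\phi(d)-1
=(n-1)-\sum_{d\in\o\O_{n,h}}\phi(d)-1
=|G|-\Bigl(\sum_{d\in\o\O_{n,h}}\phi(d)+2\Bigr),
\]
which is one less than the ``stated expression''. A direct check with $G=\mathbb{Z}_6$ and $H\cong\mathbb{Z}_2$ confirms $\Delta(\1)=2$, whereas the formula in the lemma yields~$3$. So your computed value $\sum_{d\in\O_{n,h}}\phi(d)-1$ is in fact the correct maximum degree; it simply does not equal the lemma as printed, which carries an off-by-one error in its second case. (The paper's own proof commits the same slip when passing from $|T|-1$ to the displayed formula.)
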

\begin{proof} 
Suppose $ p_i$ divides $|H|$ for every $i$ with $1 \leq i \leq r$. Let $x\in H$ be such that $ |H|=|x|$. Since  $p_i$ divides $|x| $ for every $i$, $x$ is adjacent to every element in $G \sm \{e,x\} $ in $ \1 $ and hence $\Delta(\1) = |G| - 2$.
	
Suppose $p_i \nmid |H|$ for some $i$ with $1 \leq i \leq r$. From  this, we get that $ \o{\O}_{n,h}=\{d : d\mid n, d>1~\text{~and~} (d,h)=1\}  \neq\emptyset$.
Let $ S=\{x \in G: |x| \in \o{\O}_{n,h}\}$. Then $S \s G \sm H$ with $|S|= \sum\limits_{d \in \o{\O}_{n,h}} \phi(d) $ and no non-identity element in $H$ is adjacent to any element in $S$ in $ \1 $ and i\emph{vice~versa}. By the definition of the graph $\1$, every element in $S$ is an isolated vertex in $\1$. This implies that
\[\Delta(\1) \leq |G| - \sum\limits_{d \in \o{\O}_{n,h}} \phi(d).\]
Let $x\in H$ be such that $|x|=|H|$ and let $T= G\sm  (S\cup \{e\}).$ Then $x$ is adjacent to every other element  in $T$ in $ \1 $ and so $\Delta(\1) = |G|-(\sum\limits_{d \in \o{\O}_{n,h}} \phi(d) + 1)$.
\end{proof}	

Aghababaei \textit{et~al.}~\cite[Theorem~3.5]{GAA} proved that, if $p_i \mid |H| \t{~for~every~}i(1 \leq i \leq r),$ then $\1$ is connected.  Further, it is proved that  $\1$ is not Eulerian if $G$ is a non-trivial $p$-group and $H$ is a subgroup of $G$ \cite[Theorem~3.11]{GAA}. In the following theorem, we extend the above results to a general finite cyclic group.  In fact, we show that $\1$ is not Eulerian for any finite cyclic group $G$ and a subgroup $H$ of $G$.
 
\begin{theorem} \label{4}
Let \(G\) be a finite cyclic group of order $n=p_1^{\a_1}p_2^{\a_2}\ldots p_r^{\a_r}\geq 3$ where $p_i$ are distinct primes and $\a_i$ are non-negative integers for $1\leq i\leq r$. Let $\1$ be the generalized non-coprime graph of $G$ with respect to a subgroup $\{e\}\neq H$ of $G.$ Then the following hold:
\begin{enumerate} 
\item [\rm (i)]   $\1$ is connected if and only if $p_i \mid |H|$ for every $i$
with $1 \leq i \leq r$;
\item [\rm (ii)]  $\1$ is not Eulerian.
\end{enumerate} 	
\end{theorem}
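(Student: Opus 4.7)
For part (i), I would prove the two directions separately. The sufficiency is essentially free from the machinery already set up in Lemma \ref{3}: if every $p_i$ divides $|H|$, pick $x \in H$ with $|x|=|H|$, so that $\theta_x=\{p_1,\ldots,p_r\}$; then every non-identity $y\in G$ satisfies $\theta_x\cap\theta_y\neq\emptyset$, and Lemma \ref{1}(i) makes $x$ adjacent to all other vertices. Thus $x$ is a universal vertex, so $\1$ is connected (even of diameter at most $2$). For the necessity, I argue contrapositively: suppose $p_i\nmid|H|$ for some $i$, and choose any $x\in G$ of order $p_i$, which exists because $p_i\mid n$. Since $|x|=p_i$ does not divide $|H|$, Lagrange forces $x\in G\setminus H$. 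Moreover $\theta_x=\{p_i\}$, and for every $y\in H$ one has $\theta_y\subseteq\{p:p\mid|H|\}$, hence $\theta_x\cap\theta_y=\emptyset$. By Lemma \ref{1}(i), $x$ is adjacent to nothing in $H$, and by definition $x$ is not adjacent to anything in $G\setminus H$ either. So $x$ is isolated; since $|H|\geq 2$ there is at least one other vertex, and $\1$ is disconnected.

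For part (ii) I would split on connectedness using (i). If $\1$ is disconnected, it is automatically not Eulerian. So assume $\1$ is connected, i.e.\ every $p_i$ divides $|H|$, and exhibit a vertex of odd degree. The natural candidate is an element of odd prime order inside $H$: if some odd prime $p_i$ divides $n$, then $p_i\mid|H|$, so there exists $x\in H$ with $|x|=p_i$. By Lemma \ref{2},
\[
\deg_{\1}(x)=\sum_{d\in\4}\phi(d)-1,
\]
and since $\4=\{d:d\mid n,\ p_i\mid d\}$, complementary counting against $\sum_{d\mid n}\phi(d)=n$ gives $\deg_{\1}(x)=n-\frac{n}{p_i^{\a_i}}-1$. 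Because $p_i$ is odd, $n$ and $n/p_i^{\a_i}$ have the same $2$-adic valuation, hence the same parity, so their difference is even and $\deg_{\1}(x)$ is odd.

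The main obstacle is the residual case in which $n=2^{\a_1}$ is a prime power of $2$. Here every non-identity element has order a power of $2$, so any two orders have gcd at least $2$, and the adjacency rule in $\1$ collapses to ``at least one endpoint lies in $H$''. The graph is then the join of the clique on $H\setminus\{e\}$ with the empty graph on $G\setminus H$: a vertex in $G\setminus H$ has degree exactly $|H|-1=2^{\beta}-1$, which is odd, so $\1$ is not Eulerian whenever $H$ is a proper subgroup. The subtlety I would need to resolve here is the sub-case $H=G$, where every vertex has degree $n-2$; for $n=2^{\a_1}$ this parity argument by itself does not produce an odd degree, and handling it cleanly (either via a separate argument or under the implicit understanding that $H$ is proper, in the spirit of generalizing \cite[Theorem~3.11]{GAA}) is the part of the proof I expect to require the most care.
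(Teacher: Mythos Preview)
Your argument for (i) is correct and mirrors the paper's: sufficiency there is obtained by citing \cite[Theorem~3.5(1)]{GAA} (your universal-vertex argument is the natural proof behind that citation), and necessity proceeds, as you do, by exhibiting an isolated vertex when some $p_i\nmid|H|$.

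For (ii) your case split differs from the paper's but is tidier. The paper separates first by $H\neq G$ versus $H=G$ and then by the parity of $|G|$ (respectively by $r=1$ versus $r\geq 2$); you instead split on whether an odd prime divides $n$. Your odd-prime computation $\deg(x)=n-n/p_i^{\alpha_i}-1$ for $x\in H$ of order $p_i$ is correct and handles in one stroke what the paper does in its Case~1.2 and Case~2 ($r\geq 2$), where the same count is phrased as ``$\phi(|y|)$ is even for every neighbour $y$ of $x$''. Your treatment of $n=2^{\alpha}$ with $H\neq G$ matches the paper's Case~1.1.

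The residual case $H=G$ with $G$ a $2$-group, which you flag as the delicate point, is disposed of in the paper solely by citing \cite[Theorem~3.11]{GAA}. Your suspicion is well founded, and no ``separate argument'' will close the gap: when $H=G$ and $|G|=2^{\alpha}$ with $\alpha\geq 2$, the graph $\Gamma_{G,H}$ is $K_{2^{\alpha}-1}$, every vertex has even degree $2^{\alpha}-2$, and the graph \emph{is} Eulerian. The paper itself records in Theorem~\ref{11}(iii) that $\Gamma_{G,H}\cong C_3$ when $G=H\cong\mathbb{Z}_4$. So the statement of part~(ii) is actually false in precisely this sub-case; the honest resolution is the one you already suggest, namely to read the result under the additional hypothesis that $H$ is proper (or that $|G|$ is not a power of~$2$).
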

\begin{proof} (i) Assume that $p_i\mid |H|$ for every $i(1\leq i\leq r)$. By \cite[Theorem 3.5(1)]{GAA}, $\1$ is connected. Conversely, assume that $\1$ is connected. Suppose $p_i\nmid |H|$ for some $i(1\leq i\leq r)$. As observed in the proof of Lemma~\ref{3}, $\1$ contains at least one isolated vertex and so $\1$ is not connected, which is a contradiction.

(ii)  In view of \cite[Theorem~3.11]{GAA}  and Part (i) above, it is enough to prove that $\1$ is not Eulerian when  $p_i \mid |H|$ for every $i(1\leq i \leq r)$ and $r \geq 2$.

\textbf{Case 1.} Suppose $H\neq \{e\}$ is a proper subgroup of $G$.

\textbf{Case 1.1.}  Suppose $|G|$ is even. Here  $p_i \mid |H|$ for every $i$ with $1\leq i \leq r$ and $r \geq 2$. Hence $|H|$ is even. Let $x \in G$ be such that $|x|=|G|$. Then $x \in G \sm H$ and $x$ is adjacent with vertices in $H \sm \{e\}$ only.  Since $|H \sm \{ e \}|$ is odd, $\deg(x)$ is odd in $\1.$ Hence $\1$ is not Eulerian.

\textbf{Case 1.2.}  Suppose $|G|$ is odd. Then $2 \nmid |G|$. Let $x \in H$ such that $|x|=|H|.$ Then $x$ is  adjacent to every other element in  $G \sm \{e,x\}.$ Since  $|G\sm \{e,x\}|$ is odd, $\deg(x)$ is odd in $\1$ and so  $\1$ is not Eulerian.

\textbf{Case 2.} Suppose $H = G$.

When $r=1$, $\1$ is not Eulerian by \cite[Theorem~3.11]{GAA}. 

When $r\geq 2$, $G$ is not a $p$-group. Therefore an odd prime $p_2$ divides $|G|$.  Choose an element $x \in G$ such that $|x|=p_2$. If $y$ is adjacent to $x$ in $\1$, then $\phi(|y|)$ is even. By Lemma~\ref{2}, $\deg(x)$ is odd in $\1$ and so  $\1$ is not Eulerian.
\end{proof}

In the following lemma, we find the minimum degree $\delta(\1)$ of  the generalized non-coprime graph $\1$.

\begin{lemma}\label{5}	Let \(G\) be a finite cyclic group of order $n=p_1^{\a_1}p_2^{\a_2}\ldots p_r^{\a_r}\geq 3$ where $p_i$ are distinct primes and $\a_i$ are non-negative integers for $1\leq i\leq r$.  Let $\1$ be the generalized non-coprime graph of $G$ with respect to a subgroup $\{e\}\neq H$ of $G.$ Then the following are true:
\begin{enumerate}
\item [\rm (i)] If $G$ is a $p$-group, then $\delta(\1)=\begin{cases}
|G|-2 \qquad \text{~if~} H=G;\\
|H|-1 \qquad \text{~if~} H \subset G.
\end{cases}$
\item [\rm (ii)] If $G$ is not a $p$-group and $\1$ is connected, then  
$\delta(\1)= \\ \min \{\deg_{\1}(x_i) : x_i \in G $ and $|x_i|=p_i^{\a_i} \t{~for~some~} i(1 \leq i \leq r)  \};$
\item [\rm (iii)] If $\1$ is disconnected, then $\delta(\1)=0$.
\end{enumerate}
\end{lemma}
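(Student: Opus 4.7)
The plan is to handle the three parts separately, using the degree formula of Lemma~\ref{2} throughout.

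For part~(i), the argument splits on whether $H = G$. If $H = G$, then all non-identity elements have $p$-power orders and are pairwise adjacent, so $\1 \cong K_{|G|-1}$ and $\delta = |G|-2$. If $H \s G$, then since $G$ is a $p$-group both $\O_{|G|,|x|}$ and $\O_{|H|,|x|}$ reduce to the full sets of nontrivial divisors of $|G|$ and $|H|$ respectively; substituting in Lemma~\ref{2} gives $\deg(x) = |G|-2$ for $x \in H$ and $\deg(x) = |H|-1$ for $x \in G \sm H$, so $\delta = |H|-1$. Part~(iii) is a brief observation: if $\1$ is disconnected then Theorem~\ref{4}(i) ensures some prime $p_i \mid |G|$ fails to divide $|H|$, and then the argument in the proof of Lemma~\ref{3} shows that any $x \in G$ with $|x| \in \o{\O}_{n,h}$ is isolated, so $\delta = 0$.

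Part~(ii) is the substantive case. Assume $G$ is not a $p$-group and $\1$ is connected, so $p_i \mid |H|$ for every $i$ by Theorem~\ref{4}(i); write $|H| = \prod_{i=1}^{r} p_i^{\beta_i}$ with $1 \leq \beta_i \leq \a_i$. The goal is to show that for every $y \in G \sm \{e\}$ there exists an index $i$ with $\deg(y) \geq \deg(x_i)$, where $|x_i| = p_i^{\a_i}$; combined with the fact that each $\deg(x_i)$ is itself attained, this yields the stated formula. The choice of $i$ depends on which side of $H$ contains $y$. If $y \in G \sm H$, then $|y| \nmid |H|$ forces some $p_i \in \T_y$ with $\beta_i < \a_i$; for this $i$ the Sylow $p_i$-subgroup of $G$ is not contained in $H$, so $x_i \in G \sm H$, and the inclusion $\{z \in H \sm \{e\} : p_i \mid |z|\} \subseteq N(y)$ gives $\deg(y) \geq \deg(x_i)$ directly from the second clause of Lemma~\ref{2}. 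If $y \in H$, pick any $p_i \in \T_y$: when $x_i \in H$ the analogous inclusion inside $G \sm \{e,y\}$ yields the inequality at once, while when $x_i \in G \sm H$ the two clauses of Lemma~\ref{2} take different forms and an off-by-one is bridged by the observation that $G \sm H$ contains at least $\phi(p_i^{\a_i}) \geq 1$ elements whose order is divisible by $p_i$ (for instance $x_i$ itself).

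The hard part will be this bridging step in part~(ii): matching the Case~A formula (which subtracts $1$ for $x$ itself) against the Case~B formula requires counting, inside the cyclic group $G$, the elements of $p_i$-divisible order that lie outside $H$, and then checking that the resulting inequality points the right way. All other steps should be direct applications of Lemmas~\ref{1} and~\ref{2}.
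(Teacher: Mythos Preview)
Your proposal is correct and follows the same overall strategy as the paper: parts~(i) and~(iii) are handled exactly as in the paper (via Lemma~\ref{11.1}/the complete graph observation, and via the isolated-vertex argument from Lemma~\ref{3}, respectively), and part~(ii) proceeds by showing that every vertex $y$ has degree at least $\deg(x_i)$ for some element $x_i$ of order $p_i^{\a_i}$. Your treatment of~(ii) is in fact more careful than the paper's, which simply asserts the existence of such an $x_{p_i}$ without splitting on whether $y\in H$ or $y\in G\sm H$; your case analysis and the off-by-one bridging step make explicit what the paper leaves implicit.
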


\begin{proof} (i) Assume that $G$ is a $p$-group.

Suppose $H=G$. Then $\1\cong K_{|G|-1}$, and so $\delta(\1)= |G|-2$. 

Suppose $H \subset G$.  Since $G$ is a $p$-group, $x\in H\sm \{e\}$ is adjacent with every element in $y\in G\sm H$ and vice-versa. Note that $|H|-1<|G\setminus H|$. This implies that $\deg(x)\geq |H|-1$ for all $x \in G$. Since $\langle G \setminus H\rangle$ is a totally disconnected subgraph of $\1$, $\delta(x)=|H|-1$ in $\1$.
 
(ii) Assume that $G$ is not a $p$-group and $\1$ is connected. By Theorem~\ref{4}(i), $p_i$ divides $|H|$ for every $i$ with $1 \leq i \leq r$. Let $S = \{x_i\in G  : |x_i|=p_i^{\a_i} ~\text{for~some}~i(1\leq i\leq r)\}$. Since $G$ is not a $p$-group, we have $S \neq \emptyset$.  If $y \in G \sm S$ and $p_i \mid |y|,$ then there exists an element $x_{p_i} \in S$ such that $\deg(y) \geq\deg(x_{p_i})$ in ${\1}$. This implies that $\delta(\1)=\min \{\deg_{\1}(x_i) : x_i \in G  $ and $|x_i|=p_i^{\a_i} \t{~for~some~} i~(1 \leq i \leq r)\}$.
 
(iii) Assume that $\1$ is disconnected. By Theorem~\ref{4}(i), $p_i\nmid |H|$  for~some~$i$ $(1 \leq i \leq r)$. As seen in the proof of Lemma~\ref{3}, $\1$ contains at least one isolated vertex and so $ \delta(\1)=0$.
\end{proof}	

\subsection{Characterizations of generalized non-coprime graphs}
In this section, we characterize all finite cyclic groups whose generalized non-coprime graph is in the class of star, path, cycle or triangle-free graphs.

\begin{lemma}\label{11.1}	
Let \(G\) be a finite cyclic group of order $n\geq 3$ and let $\1$ be the generalized non-coprime graph of $G$ with respect to a subgroup $\{e\}\neq H$ of $G$. 
Then the following hold good.
\begin{enumerate}
\item [\rm (i)] If $p$ is a prime number, $p\mid n$ and $H$ is a $p$-group, then $\langle H\setminus \{e\}\rangle$ is a clique in $\1$ and $\langle G\setminus H\rangle$ is an independent of $\1$;
\item[\rm (ii)] If $G$ is a $p$-group and $H\neq G,$ then $ \1$ contains a $K_{|H|-1,|G\setminus H|}$ as a subgraph. In particular, if $G$ is a $2$-group and $H \cong \mathbb{Z}_2$, then $ \1\cong K_{1,|G|-2};$
\item [\rm (iii)]  $\1$ is the  complete graph $K_{|G|-1}$ if and only if $G$ is a $p$-group and $H=G$. 
\end{enumerate}		 
\end{lemma}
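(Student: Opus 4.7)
The plan is to prove the three parts sequentially, each resting on two elementary observations: (a) in a $p$-group every non-identity element has order a positive power of $p$, and (b) by definition two vertices of $\1$ are adjacent only if at least one of them lies in $H$.

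For part (i), the clique claim follows from (a): for distinct $x,y\in H\sm\{e\}$, both orders are positive powers of the same prime $p$, so $\gcd(|x|,|y|)\geq p>1$, and since both vertices lie in $H$ we have $xy\in E(\1)$. The independent-set claim is immediate from (b): two vertices in $G\sm H$ can never be adjacent, irrespective of the structure of $H$. (Only the first half uses the hypothesis that $H$ is a $p$-group.)

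For part (ii), I would apply the same reasoning to cross-pairs. If $G$ is a $p$-group, then for every $x\in H\sm\{e\}$ and every $y\in G\sm H$ the orders $|x|,|y|$ are positive powers of $p$, hence non-coprime; together with $x\in H$, this makes the pair adjacent. This produces every edge of the complete bipartite graph $K_{|H|-1,|G\sm H|}$ inside $\1$. In the special case $G$ is a $2$-group and $H\cong \mathbb{Z}_2$, the part $H\sm\{e\}$ is a single vertex and so contains no internal edges, while by (i) the set $\langle G\sm H\rangle$ is independent; therefore the bipartite edges just described are all of them, giving $\1\cong K_{1,|G|-2}$.

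For part (iii), the forward direction is part (i) applied with $H=G$. For the converse, suppose $\1\cong K_{|G|-1}$. If $H$ were a proper subgroup, then $|H|\geq 2$ combined with Lagrange's theorem would give $|G\sm H|=|G|-|H|\geq |H|\geq 2$, and any two distinct elements of $G\sm H$ would be non-adjacent by the independent-set claim of (i), contradicting completeness; hence $H=G$. Then, if $G$ were not a $p$-group, two distinct primes $p$ and $q$ would divide $|G|$, and any elements of orders $p$ and $q$ would have coprime orders and therefore be non-adjacent in $\1$, again a contradiction. Thus $G$ is a $p$-group. I do not foresee any substantive obstacle: each claim reduces to the adjacency rule together with elementary facts about orders in $p$-groups. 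The only step requiring mild attention is the converse in (iii), where one must first rule out $H$ being proper via the independent-set observation before invoking coprime prime-order elements to force the $p$-group conclusion.
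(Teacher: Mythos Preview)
Your proposal is correct and follows essentially the same approach as the paper. The paper declares (i) trivial and handles (ii) identically; for the non-trivial direction of (iii) the paper organizes the case split slightly differently (first ruling out $H$ not a $p$-group, then $G$ not a $p$-group with $H$ a $p$-group, then $G$ a $p$-group with $H\subsetneq G$), but the underlying observations---non-adjacency of elements of distinct prime orders, and total disconnection of $\langle G\setminus H\rangle$---are exactly the ones you use.
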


\begin{proof} The proof of (i) is trivial.

(ii) Each element $x \in H\setminus \{e\}$ is adjacent to every element from $G\setminus H$ in $\1.$ Hence $\1 $ contains a $K_{|H|-1,|G\setminus H|}$ as a subgraph. The  part of the statement directly follows from these two conclusions. 

(iii) Assume that  $\1$ is complete. Suppose $H$ is not a $p$-group. Then there exist distinct primes $p_i$ and $p_j$ such that $p_ip_j \mid |H|$ for $i \neq j.$ Choose $x,y \in G$ such that $|x|=p_i$ and $|y|=p_j.$ Then $x$ and $y$ are not adjacent in $\1,$ which is a contradiction. Suppose $G$ is not a $p$-group and $H$ is a $p$-group. Then $\1$ is disconnected, which is a contradiction. Suppose $G$ is a $p$-group and $H\subset G.$ Then $\phi(|G|) \geq 2$ and so $|G\sm H|\geq 2.$ Since $\langle G\sm H\rangle$ is a totally disconnected subgraph in $\1,$ $\1$ is not complete, which is a contradiction.  Converse part follows from \cite[Theorem~3.4]{GAA}.
\end{proof}	

\begin{theorem}\label{11}	
Let \(G\) be a finite cyclic group of order $n\geq 3$ and let $\1$ be the generalized non-coprime graph of $G$ with respect to a subgroup $\{e\}\neq H$ of $G$. 
Then the following hold:
\begin{enumerate}
\item [\rm (i)] $\1$ is a star graph  if and only if either $G$ is a $2$-group and $H \cong \mathbb{Z}_2$ or $G=H=\mathbb{Z}_3.$
\item [\rm  (ii)] $\1$ is a path if and only if either $G=H=\mathbb{Z}_3$ or
$G\cong \mathbb{Z}_4$ and $H \cong \mathbb{Z}_2$.
\item [\rm  (iii)]  $\1$ is a cycle if and only if $G\cong$ $\mathbb{Z}_4$ and $H=G$.		
\item [\rm (iv)]  $\1$ is triangle-free if and only if either $G$ is a $2$-group and $H\cong \mathbb Z_2$ or  $H=G=\mathbb Z_3$.
\item [\rm  (v)]  $\1$ is complete bipartite if and only if either $G$ is a $2$-group and  $H\cong \mathbb{Z}_2$ or $H=G=\mathbb Z_3$.
\end{enumerate}		 
\end{theorem}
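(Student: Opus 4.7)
The plan is to verify the five ``if'' directions by direct computation using Lemma~\ref{11.1}, and then to establish the ``only if'' directions by first proving (iv), which will serve as the structural backbone for the remaining parts. For the ``if'' direction, when $G$ is a $2$-group and $H\cong\mathbb{Z}_2$, Lemma~\ref{11.1}(ii) gives $\Gamma_{G,H}\cong K_{1,|G|-2}$, a star (hence complete bipartite and triangle-free), degenerating to the path $P_3$ when $|G|=4$. When $G=H=\mathbb{Z}_3$, the two non-identity vertices both lie in $H$ with $\gcd$ of orders equal to $3$, giving $K_2$, which is simultaneously a star, a path, and complete bipartite. For (iii), Lemma~\ref{11.1}(iii) applied with $G=H$ of order $4$ gives $K_3=C_3$.

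For the ``only if'' direction I would prove (iv) first. Suppose $\Gamma_{G,H}$ is triangle-free. The crucial claim is that $H$ must be a $p$-group with $|H|\leq 3$. If $H$ had two distinct prime divisors $p<q$, then since $H$ is cyclic it would contain $\phi(pq)=(p-1)(q-1)\geq 2$ elements of order $pq$ and at least one element of order $p$; these all lie in $H$ and any two share a prime factor in their orders, so picking two of order $pq$ together with one of order $p$ produces a triangle, a contradiction. Once $H$ is a $p$-group, Lemma~\ref{11.1}(i) yields $\langle H\setminus\{e\}\rangle\cong K_{|H|-1}$, forcing $|H|\leq 3$. If $|H|=3$ and $H\subsetneq G$, the two order-$3$ elements of $H$ together with any element of $G\setminus H$ whose order is divisible by $3$ (such an element exists since $G$ is then a $3$-group of order at least $9$) form a triangle; hence in this subcase $H=G=\mathbb{Z}_3$. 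If $|H|=2$ and $G$ is a $2$-group, $\Gamma_{G,H}\cong K_{1,|G|-2}$ is indeed triangle-free, matching the second family.

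The remaining parts follow by restriction. For (v), the complete-bipartite property strengthens triangle-freeness, and both families from (iv) yield complete bipartite graphs, so no new cases arise. For (i), both families in (iv) are stars. For (ii), a path has maximum degree at most $2$, which in the $2$-group family forces $|G|\leq 4$, yielding $G=\mathbb{Z}_4$ with $H\cong\mathbb{Z}_2$, alongside $K_2$ from $G=H=\mathbb{Z}_3$. For (iii), cycles $C_n$ with $n\geq 4$ are triangle-free but are not realized by either family from (iv); the only remaining case $C_3=K_3$ is handled by Lemma~\ref{11.1}(iii), which forces $G=H$ to be a $p$-group of order $4$, i.e., $G=\mathbb{Z}_4$ with $H=G$.

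The main obstacle I anticipate is the triangle construction in the mixed-prime case of (iv): one must count elements of each relevant order within $H$ precisely enough to exhibit the three pairwise-adjacent vertices. A secondary delicate point is the subcase $|H|=3$ with $H\subsetneq G$, where one must verify that $G\setminus H$ actually contains an element whose order is divisible by $3$; this uses that $G$ is forced to be a $3$-group of order at least $9$ in that subcase, so elements of order $9$ (or higher powers of $3$) necessarily exist outside $H$. Once these triangle constructions are in place, the reductions to (i), (ii), (iii), and (v) are routine and rest entirely on Lemma~\ref{11.1}.
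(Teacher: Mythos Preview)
Your proof of (iv) has a genuine gap. After reducing $H$ to a $p$-group with $|H|\le 3$, your case split treats only ``$|H|=3$'' and ``$|H|=2$ with $G$ a $2$-group'', omitting the case $|H|=2$ with $G$ \emph{not} a $2$-group. In that case $H\setminus\{e\}$ is a single vertex and no two vertices of $G\setminus H$ are adjacent, so $\Gamma_{G,H}$ is a star on the even-order elements together with isolated vertices of odd order---hence triangle-free but outside both listed families (e.g.\ $G=\mathbb{Z}_6$, $H\cong\mathbb{Z}_2$ gives $P_3\cup 2K_1$). This subcase cannot be closed, so (iv) as stated is in fact false, and since you make (iv) the backbone, your derivations of (i), (ii) and (v) from it collapse. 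The paper avoids this by proving (i), (ii), (iii), (v) directly first: each of those graph classes is connected, so the subcase ``$G$ not a $p$-group, $H$ a $p$-group'' is legitimately eliminated via Theorem~\ref{4}(i). The paper then claims (iv) by importing the arguments of (ii), but that same subcase is handled in (ii) only by disconnectedness, which contradicts ``path'' but not ``triangle-free''; so the paper's (iv) shares your gap, while its proofs of (i), (ii), (iii), (v) do not.

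There is also a minor slip in your $|H|=3$, $H\subsetneq G$ subcase: you assert that $G$ is then a $3$-group of order at least $9$, which is false (take $G=\mathbb{Z}_6$, $H\cong\mathbb{Z}_3$). The triangle you need still exists---any generator of $G$ lies in $G\setminus H$ and has order divisible by $3$, since $3\mid|G|$ and $|G|>3$---so only the justification needs repair, not the conclusion.
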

\begin{proof} (i)  Assume that $G$ is a $2$-group and $H \cong \mathbb Z_2$. Since $|G|\geq 3,$ $H\neq G$. By Lemma~\ref{11.1}(ii),  $\1=K_{1,|G|-2}$ and so it is a star graph.  If $G=H=\mathbb Z_3$, then $\1=K_{1,1},$ again a star graph.

Conversely, assume that $\1$ is a star graph. Suppose $G$ is not a $p$-group. Then $\phi(|G|)\geq 2$.

If $H=G,$ then choose $x$ and $y$ in $G$ such that $|x|=|y|=|G|$ and $x\neq y$. 
Then $x,y$ are adjacent to all other vertices in $\1$ and so $\deg(x)=\deg(y)=|G|-2 \geq 2$; so $\1$ is not a star graph. 

Suppose $H\neq G$ and $H$ is not a $2$-group. Then $\phi(|H|)\geq 2$ and choose $x\neq y \in H$ such that $|x|=|y|=|H|$. Let $u\neq v \in G$ such that $|u|=|v|=|G|$. Note that $u,v \in G \setminus H$. Both $u$ and $v$ are adjacent to $x,y$ in $\1$ and \emph{vice~versa}. This implies that $\1$ contains at least two distinct vertices of degree at least two; so $\1$ is not a star graph.

Suppose $H$ is a $2$-group. Then $\1$ is disconnected, so not a star graph.   

Thus $\1$ is not a star graph whenever $|G|$ is not a $p$-group. Hence $G$ is a $p$-group. When $p\geq 5$ by the above arguments, $\1$ is not a star graph. Hence $p$ is either $2$ or $3$. 
 
Let $p=3.$ Then the only possibility for $H$ is $G$. If $|G|\geq 4,$ then by Lemma~\ref{11.1}(iii), $\1$ is the complete graph $K_{|G|-1}$ and so $\1$ is not a star graph, which is a contradiction. Hence $|G|=3$ and $H=G$.

Let $p=2$. In this case $|G|\geq 4$. If $|H|\geq 4$, then by Lemma~\ref{11.1}(i), $\langle H\rangle$ is a clique and is a subgraph of $\1$,  which is a contradiction. Hence $H \cong \mathbb Z_2$. 
 
(ii) If $H=G=\mathbb Z_3$, then $\1=P_2$. If $G\cong \mathbb{Z}_4$ and  $|H|=2$, then $\1=P_3$.

Conversely, assume that $\1$ is a path. If $H=G$, then $G$ is a $p$-group and $|G|\geq 4$, then $\1$ is a complete graph with at least three vertices, which is a contradiction. If $\mathbb Z_2 \ncong H\ncong G $ and $G$ is a $p$-group, then $|H|\geq 3$ with two distinct elements $x,y \in H$ such that $|x|=|y|$ and $|G\sm H|\geq 2$. Note that $w \in G\sm H$ is adjacent to $x,y$. This gives that $\1$ contains a triangle, a contradiction. If $G$ is not a $p$-group and $H$ is a $p$-group, then $\1$ is disconnected, a contradiction. If $G$ is not a $p$-group and $H$ is not a $p$-group, then one can choose distinct elements $x,y,z \in H$ such that $|x|=|y|=p_j$ and $|z|=p_ip_j$, where $p_i$ and $p_j$ are distinct primes and $i \neq j$ with $j=2$. Then $\langle x,y,z\rangle=C_3$ in $\1$, which is a contradiction. Hence $\1$ is path if and only if $H=G=\mathbb Z_3$ or $|G|=4$ with $|H|=2$.
	
(iii) Assume that $G\cong$ $\mathbb Z_4$ with $H=G$. By \cite[Corollary~1]{MET}, $\1$ is a cycle. Conversely, assume that $\1$ is a cycle. Suppose $H=G$ with $|G|\neq 4.$ By \cite[Corollary~1]{MET}, $\1$ is not a cycle, which is a contradiction. If $H \neq G,$ then $|H\sm \{e\}|<|G\sm H|$ and $\langle G\sm H\rangle$ is a totally disconnected subgraph of $\1.$ Hence $\1$ is not a cycle, which is a contradiction. Hence $\1$ is a cycle if and only if $H=G= \mathbb Z_4$.  
	
(iv) The forward implication follows from part (i) above; the converse from the arguments in the proof of part (ii).
 	
(v) The proof follows from parts (i) and (iv).
\end{proof}
	  
A graph $\G$ is said to be \textit{unicyclic} if $\G$ contains exactly one cycle.	 In the following theorem, we obtain a characterization for the graph $\1$ to be unicylic.

\begin{theorem} \label{12.1} Let \(G\) be a finite cyclic group of order $n\geq  3$ and \(H\neq \{e\}\) be a subgroup of $G$. Then the generalized non-coprime graph $\1$ is unicyclic if and only if $H=G$ with $G\cong\mathbb Z_4$.		\end{theorem}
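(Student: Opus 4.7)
The plan is to dispose of the forward implication immediately via Theorem~\ref{11}(iii): when $G=H\cong\mathbb{Z}_4$, the graph $\1$ is isomorphic to $C_3$, which contains exactly one cycle and is hence unicyclic. The real content is the converse, where one assumes $\1$ is unicyclic and must pin down $G$ and $H$.

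My first reduction will be to show that $H$ is necessarily a $p$-group. Since $H$ is a cyclic subgroup of a cyclic group, if $|H|$ had two distinct prime divisors $p$ and $q$, then $H$ would contain $\phi(pq)=(p-1)(q-1)\geq 2$ elements of order $pq$ together with at least one element of order $p$; all of these lie in $H$ and pairwise share a prime factor of their orders, producing several distinct triangles inside $\langle H\sm\{e\}\rangle$ and contradicting unicyclicity.

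Next I split on whether $G$ is a $p$-group. If $G$ is not a $p$-group, Theorem~\ref{4}(i) says $\1$ is disconnected; its only nontrivial component is the join of the clique $\langle H\sm\{e\}\rangle\cong K_{|H|-1}$ with the independent set $\{y\in G\sm H:p\mid |y|\}$, while the rest of $G\sm H$ consists of isolated vertices. I would then rule this case out by subcases on $|H|$: if $|H|\geq 4$, the clique plus even one external vertex (which always exists, since $G$ contains elements of order $pq$ lying outside $H$) already produces multiple triangles; if $|H|=2$ the component is a star and hence acyclic; and if $|H|=3$ (so $p=3$) the number of external vertices equals $\tfrac{2|G|}{3}-2$, which is never equal to $1$, so the count is either $0$ (acyclic) or at least $2$ (yielding both triangles and a $C_4$).

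If $G$ is a $p$-group, then every pair of non-identity elements of $G$ shares the prime $p$ in their orders, so $\1$ is exactly the join of the clique $\langle H\sm\{e\}\rangle$ and the independent set $G\sm H$. When $H=G$, Lemma~\ref{11.1}(iii) gives $\1\cong K_{|G|-1}$, which is unicyclic iff $|G|-1=3$, forcing $G\cong\mathbb{Z}_4$. When $H\s G$, Lemma~\ref{11.1}(ii) provides $K_{|H|-1,|G\sm H|}$ as a subgraph, and since $|G\sm H|\geq |H|$ in a proper subgroup of a cyclic $p$-group, the only choice compatible with having at most one cycle is $|H|=2$, yielding a cycle-free star. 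Combining the subcases leaves $G=H\cong\mathbb{Z}_4$ as the unique possibility. I expect the main obstacle to be the bookkeeping in the disconnected case when $|H|=3$; there, the divisibility observation $\tfrac{2|G|}{3}-2\neq 1$ is exactly what rules out a spurious single-triangle configuration.
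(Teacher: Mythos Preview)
Your argument is essentially correct and arrives at the result, but by a different decomposition than the paper. The paper does not first reduce to $H$ being a $p$-group; instead it splits immediately on $H=G$ versus $H\neq G$. In the latter case it exploits a single observation: $G$ has $\phi(|G|)\geq 2$ generators $x,y$ (elements of order $|G|$), which lie outside any proper $H$ and are adjacent to every element of $H\setminus\{e\}$. Picking any two adjacent $u,v\in H\setminus\{e\}$ (which exist as soon as $|H|\geq 3$, whether or not $H$ is a $p$-group) then yields two distinct triangles $\{x,u,v\}$ and $\{y,u,v\}$. This disposes of $H\neq G$ uniformly, without your split on whether $G$ is a $p$-group and without your external-vertex count; the $|H|=3$ subcase you flag as delicate simply does not arise. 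Your join description of the non-trivial component is more structural and arguably more illuminating, but it costs extra bookkeeping that the paper's generator trick avoids.

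Two small points you should tighten. First, in your reduction to $H$ a $p$-group, the set ``$\phi(pq)$ elements of order $pq$ together with at least one element of order $p$'' may consist of only three vertices when $\{p,q\}=\{2,3\}$ and $p=2$, producing a single triangle rather than several; choose instead the odd prime (so $\phi$ of it is at least~$2$) to get four pairwise-adjacent vertices in $H$. Second, your formula $\tfrac{2|G|}{3}-2$ for the number of external vertices when $|H|=3$ is only valid when $9\nmid |G|$; in general, writing $|G|=3^{a}m$ with $\gcd(3,m)=1$, the count is $m(3^{a}-1)-2$. Your conclusion that this is never equal to~$1$ still holds, since $m\geq 2$ forces it to be at least~$2$, but the stated formula should be corrected.
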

\begin{proof}		
Assume that $H=G$ with $G\cong \mathbb Z_4.$ Then $\1=C_3$ and so $\1$ is unicyclic. 

Conversely, assume that $\1$ is unicyclic.

Suppose $ H \neq G$ and  $H$ is a $p$-group. If $H \cong \mathbb Z_2$, then $\1$ is a star graph and so $\1$ has no cycle, a contradiction. Hence $|H|\geq 3$. Then there exist distinct elements $u,v\in H \setminus \{e\}$. Since $\phi(n) \geq 2,$ one can choose distinct elements $x,y \in G$ such that $|x|=|y|=|G|$. Then $x,y \in G \sm H$. Since both $x$ and $y$ are adjacent to any element from $H\sm \{e\}$ in $\1.$ From this, we have two cycles $\langle \{x,u,v\}\rangle$ and $\langle \{y,u,v\}\rangle>$ of length $3$ in $\1,$ which is a contradiction.

Suppose $ H \neq G$ and $H$ is not a $p$-group. Then there exist two distinct primes $p_i$ and $p_j$ such that $p_i, p_j \mid |H|$. Let $u,v \in H$ such that $|u|=p_i$ and $|v|=p_ip_j$. Since $H \neq G,$ one can choose distinct elements $x,y \in G$ such that $|x|=|y|=|G|$.  Then $\langle\{x,u,v\}\rangle$ and $\langle\{y,u,v\}\rangle$ are two different cycles of length $3$ in $\1,$ which is a contradiction. Hence $H=G$.  

If $|G|=3,$ then $\1$ is a path, a contradiction. If $|G|>4$ and $G$ is a $p$-group, then by Lemma~\ref{11.1}(iii), $\1$ is a complete graph of order greater than or equal to 4, again a contradiction. If $|G|>4$ and $G$ is not a $p$-group, then there exist two distinct primes divisors $p_i$ and $p_j$ for $|G|$. Choose distinct elements $u,v,x,y \in G$ such that $|u|=|v|=p_j$ and $|x|=|y|=p_ip_j.$ Then $\langle\{x,u,v\}\rangle$ and $\langle\{y,u,v\}\rangle$ are two different cycles of length $3$ in $\1,$ which is a contradiction. Hence $\1$ is unicyclic if and only if $H=G= \mathbb Z_4.$ 	
\end{proof}

A graph $\G$ is a {\it split graph} if $V(\G)$ can be partitioned into a clique and an independent set. A well-known characterization of split graphs is given below.

\begin{theorem}{\normalfont (\cite[Theorem 6.3]{FH77})}\label{thm4} A connected graph $\G$ is a split graph if and only if t$\G$ contains no induced subgraph isomorphic to $2K_2$, $C_4$ or $C_5$. 
\end{theorem}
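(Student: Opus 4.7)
The plan is to establish both directions separately, with the forward implication reducing to direct case analysis and the reverse implication carrying the substantive content. For the forward direction, assume $V(\G) = K \cup I$ with $K$ a clique and $I$ an independent set, and verify that none of $2K_2$, $C_4$, $C_5$ can arise as an induced subgraph. An induced $2K_2$ with edges $ab, cd$ forces each edge to contribute at least one endpoint to $K$; those two vertices of $K$ must then be adjacent, contradicting the four missing cross-edges. For $C_4 = v_1v_2v_3v_4$, each of the non-adjacent pairs $\{v_1,v_3\}$ and $\{v_2,v_4\}$ contributes at most one vertex to $K$, so at least two vertices lie in $I$; among the four consecutive-pair edges of $C_4$, two endpoints in $I$ are then adjacent, contradicting independence. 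For $C_5$, since it contains no triangle and no three pairwise non-adjacent vertices, both $|K| \leq 2$ and $|I| \leq 2$, forcing $|V(C_5)| \leq 4$, a contradiction.

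For the reverse direction, let $K$ be a maximum clique of $\G$, and aim to show that $V(\G) \sm K$ is independent. Suppose not: pick $u, v \in V(\G) \sm K$ with $uv \in E(\G)$, and by maximality of $K$ pick $a, b \in K$ with $u \not\sim a$ and $v \not\sim b$. When $a = b$, if $u$ and $v$ were both adjacent to every vertex of $K \sm \{a\}$, then $(K \sm \{a\}) \cup \{u, v\}$ would form a clique of size $|K|+1$; hence some $c \in K \sm \{a\}$ is non-adjacent to, say, $u$, and $\{u, v, a, c\}$ induces $2K_2$ provided $v \not\sim c$. When $a \neq b$, the induced subgraph on $\{u, v, a, b\}$ has edges $uv$ and $ab$, and depending on whether $ub$ and $va$ are edges one obtains either $2K_2$ (neither), $C_4$ (both), or an induced $P_4$ that must be completed to a $C_5$ or $C_4$ by adjoining a further witness vertex afforded by the maximality of $K$.

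The main obstacle is the bookkeeping in the intermediate subcases that produce only an induced $P_4$ rather than one of the forbidden subgraphs directly, together with the repeated appeal to maximality of $K$ to supply the extending vertices. A more conceptual alternative that I would pursue in parallel exploits the complementation symmetry $\overline{2K_2} = C_4$ and $\overline{C_5} = C_5$: the forbidden family is closed under complementation, so $\G$ is split if and only if $\overline{\G}$ is split, and one can then induct on $|V(\G)|$ by peeling off a vertex of extremal degree into either $K$ or $I$, the absence of $C_4$ and $2K_2$ guaranteeing that the split partition extends consistently.
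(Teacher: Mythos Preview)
The paper does not prove this statement at all: it is quoted as a known result from F\"oldes and Hammer and used as a black box in the proof of Theorem~\ref{9}. So there is no ``paper's proof'' to compare against; the question is simply whether your argument stands on its own.

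Your forward direction is fine. The reverse direction, however, has genuine gaps that you yourself flag but do not close. In the case $a=b$ you produce $c\in K\setminus\{a\}$ with $u\not\sim c$ and then claim $\{u,v,a,c\}$ induces $2K_2$ \emph{provided} $v\not\sim c$; the subcase $v\sim c$ is simply dropped. In the case $a\neq b$ with exactly one of $ub$, $va$ an edge, you obtain only an induced $P_4$ and assert that ``a further witness vertex afforded by the maximality of $K$'' completes it to $C_4$ or $C_5$. This is not automatic: if you try to extend, you again land in $P_4$ configurations, and naive maximality of $K$ does not by itself terminate the process. The standard remedy is to choose $K$ more carefully---for instance, among all maximum cliques pick one for which the number of edges induced on $V(\G)\setminus K$ is minimum---so that a swap argument (replacing a vertex of $K$ by $u$) yields an immediate contradiction in the $P_4$ subcases. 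Without some such refinement the case analysis does not close.

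Your alternative plan at the end (use the self-complementarity of the forbidden family and induct by peeling off an extremal-degree vertex) is plausible in spirit but, as written, is only a heuristic: you have not said which vertex to peel, nor why the absence of $C_4$ and $2K_2$ forces its neighbourhood or non-neighbourhood to be compatible with an inductively obtained split partition of the rest. Either line can be made to work, but neither is complete as stated.
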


In the following theorem, we obtain a characterization for the graph $\1$ to be a split graph.
\begin{theorem}\label{9} Let \(G\) be a finite cyclic group of order $n\geq 3$ and \(H\neq \{e\}\) be a subgroup of $G.$ Then the generalized non-coprime graph $\1$ is a split graph if and only if either of the following true.
\begin{enumerate}
\item[\rm (i)] There exists a prime number $p$ such that $p\mid |G|$ and $H$ is a $p$-group;
\item[\rm (ii)]  $H = G = \mathbb Z_6$.	
\end{enumerate}
\end{theorem}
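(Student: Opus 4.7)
The plan is to prove both implications, invoking the Foldes--Hammer characterisation (Theorem~\ref{thm4}) for the converse direction.

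For the ``if'' direction, if (i) holds then Lemma~\ref{11.1}(i) gives that $\langle H\setminus\{e\}\rangle$ is a clique and $\langle G\setminus H\rangle$ is totally disconnected in $\1$. Since these two sets partition $V(\1)$, this is already a split partition, so $\1$ is split. If (ii) holds, namely $G=H=\mathbb{Z}_6$, a direct check shows that the two elements of order $3$ and the two elements of order $6$ pairwise share the prime $3$ and so induce a $K_4$, while the unique order-$2$ element $a$ is adjacent only to the two order-$6$ elements; taking $\{a\}$ as the independent set and these four vertices as the clique gives the required split partition.

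For the converse, assume $\1$ is split; by Theorem~\ref{thm4}, $\1$ contains no induced $2K_2$, $C_4$, or $C_5$. Suppose (i) fails, i.e.\ $H$ is not a $p$-group; the task is to force (ii). Since $H$ is cyclic, $|H|$ has at least two distinct prime divisors. I would split according to whether $H\subsetneq G$ or $H=G$. Suppose first that $H\subsetneq G$. If $|H|$ is divisible by two distinct odd primes $p$ and $q$, then $\phi(p),\phi(q)\geq 2$ produce two elements of order $p$ and two of order $q$ in $H$ whose crossing pairs have coprime orders; these four elements induce a $2K_2$, a contradiction. If instead the prime divisors of $|H|$ are exactly $\{2,q\}$ with $q$ odd, I would further split: when $4\mid|H|$, an order-$4$ element of $H$ together with the order-$2$ element $a$ supplies a second edge disjoint (with no crossings) from an edge $b_1b_2$ between two order-$q$ elements, yielding $2K_2$; otherwise $|H|=2q^\beta$, and according to whether $|G|$ possesses a prime not in $\{2,q\}$, a higher $2$-power, or a higher $q$-power than $|H|$, a suitable $w\in G\setminus H$ is chosen so that either $\{a,w\}\cup\{b_1,b_2\}$ forms an induced $2K_2$ or a pair $w_1,w_2\in G\setminus H$ of a common appropriate order yields an induced $C_4$ on $\{a,w_1,b_1,w_2\}$.

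Suppose now $H=G$. If $|G|$ has two distinct odd prime divisors or at least three distinct prime divisors, the same construction of a $2K_2$ from two order-$p$ and two order-$q$ elements applies. Hence one may assume $|G|=2^{\alpha}q^{\beta}$ with $q$ odd. When $\alpha\geq 2$, two order-$4$ elements together with two order-$q$ elements again give $2K_2$, so $\alpha=1$; when $\beta\geq 2$, a careful count of order-$q$ versus order-$q^{2}$ elements together with the unique order-$2$ element should again force one of the forbidden induced subgraphs, so $\beta=1$. A final check then rules out all $|G|=2q$ with $q>3$ by producing a forbidden induced subgraph, leaving only $|G|=6$, i.e.\ (ii). The principal obstacle throughout is the sub-case where only the primes $\{2,q\}$ occur and there is a single order-$2$ element, so the naive ``two order-$p$ elements'' edge argument is unavailable; here the forbidden induced subgraph must be assembled from an edge spanning $H$ and $G\setminus H$ or from elements of higher $q$-power order, and systematically ruling out every exception until one arrives at $G=H=\mathbb{Z}_6$ is the technical heart of the proof.
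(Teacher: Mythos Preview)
Your ``if'' direction and your overall strategy match the paper's. For $H\subsetneq G$ with $H$ not a $p$-group, your case analysis is correct but more laborious than necessary: the paper handles this uniformly by taking $x,y\in H$ of two distinct prime orders $p_1,p_2$ and $u,v\in G\setminus H$ of order $|G|$, which always induce a $C_4$ (since $x\not\sim y$, $u\not\sim v$, and each of $x,y$ is adjacent to each of $u,v$). Your sub-case splitting on the prime divisors of $|H|$ and the extra divisors of $|G|$ can therefore be avoided entirely.

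The genuine gap is in your $H=G$ case with $|G|=2q^\beta$. You assert that for $\beta\ge2$, and again for $|G|=2q$ with $q>3$, one can ``force one of the forbidden induced subgraphs''. This step cannot succeed: for any odd prime $q$ and any $\beta\ge1$, the graph $\Gamma_{G,G}$ with $G=\mathbb{Z}_{2q^\beta}$ \emph{is} split. Every non-identity element other than the unique involution has order divisible by $q$, so these elements form a clique, while the single order-$2$ element gives an independent set of size one. Hence no induced $2K_2$, $C_4$, or $C_5$ exists and your promised contradiction is unavailable. The paper's own proof glosses over exactly the same point, jumping from ``not both $\phi(p_i)\ge2$'' to ``$p_1=2$, $p_2=3$ and so $|G|=6$'' without justification; in fact the theorem as stated is false, with $G=H=\mathbb{Z}_{10}$ already a counterexample, so no correct proof of the converse can exist without amending condition~(ii).
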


\begin{proof} Suppose (i) is true. By Lemma~\ref{11.1}(i), $\langle H \sm \{e\}\rangle=K_{|H|-1}$ and $\langle G \sm H\rangle=\o{K}_{|G \sm H|}$ in $\1.$ From this, $\1$ is a split graph.

(ii) Assume that $H=G=\mathbb{Z}_6.$ If $S= \{1,2,4,5 \} \s \mathbb Z_6=H,$ then $\langle S\rangle=K_4$ and the vertex $3$ is adjacent to only $1$ and $5$ in $\1.$ Hence $\1$ is a split graph. 
	
Conversely, assume that $\1$ is a split graph. Assume that $G$ is not a $p$-group. Then there exists $r$ distinct prime divisors $p_i$ for $|G|$ with $r\geq 2$.

Suppose  that $H \neq G$.

Suppose that $p_i \mid |H|$ for every $i$ with $1 \leq i \leq r$. By Theorem~\ref{4}(i), $\1$ is connected. Let $x,y \in H$ such that $|x|=p_1$ and $|y|=p_2$. Since $\phi(|G|) \geq 2$, choose $u,v \in G$ such that $|u|=|v|=|G|$ with $u \neq v$. Then $\langle\{u,v,x,y \}\rangle=C_4$ in $\1$ and by Theorem~\ref{thm4}, $\1$ is not a split graph, which is a contradiction.

Suppose that $p_i \mid |H|$ and $p_j \nmid |H|$ for $1 \leq i,j\leq r$ and $i\neq j.$ By Theorem~\ref{4}(i), $\1$ is disconnected. As proved in Lemma~\ref{3}, $\1$ contains at least one isolated vertex. Let $S$ be the set of all isolated vertices in $\1$ and $T =G\sm (S\cup\{e\}).$ Since $\phi(|G|) \geq 2,$ choose $u,v \in G$ such that $|u|=|G|$ and $|v|=|G|$ with $u \neq v.$ Since both $u$ and $v$ are adjacent to every element in $H \setminus \{e\}$ in $\1$, $u$ and $v$ are not isolated vertices and so $u,v \in T$. Further $u$ and $v$ are not adjacent in $\1.$ This implies that $\langle T\rangle$ is not a complete subgraph in $\1$. These imply that $\1$ is not a split graph, which is a contradiction. 

Hence we have $H = G$.  Suppose  $\phi(p_i)\geq 2$ for $i=1,2$. Then one can choose distinct elements $x,y,u,v \in H$ such that $|x| =|y|=p_1$ and $|u| =|v|=p_2.$ Then $\langle\{x,y,u,v \}\rangle=2K_2$ in $\1$ and  $\1$ is connected. By Theorem~\ref{thm4}, $\1$ is not a split graph. Hence, we have $p_1=2, p_2=3$ and so $|G|=6.$ From this, $G\cong \mathbb{Z}_6$.
\end{proof}

A graph $\G$ is a {\it claw-free} graph if $\G$ does not contain a $K_{1,3}$ as an induced subgraph. In the following theorem, we characterize when $\1$ is a claw-free graph.

\begin{theorem} \label{10}
Let \(G\) be a finite cyclic group of order $n\geq 3$ and \(H\neq \{e\}\) be a subgroup of $G.$ Then the generalized non-coprime graph $\1$ is claw-free if and only if $H$ satisfies any one of the following.		
\begin{enumerate}
\item[\rm (i)] $H=G$ and $|G|\leq p_1^{\a_1}p_2^{\a_2}$ for two distinct primes $p_1$ and $p_2$ and $\a_1$ and $\a_2$ are non-negative integers;
\item[\rm (ii)] $ H \neq G $ and $G$ is isomorphic either $\mathbb Z_4$ or $\mathbb Z_6$.
\end{enumerate}		 
\end{theorem}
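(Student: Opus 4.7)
The plan is to prove both directions. For sufficiency I describe the graph structure in each listed configuration explicitly; for necessity I assume $\1$ is claw-free and, via a case analysis on whether $H=G$ and on whether $G$ and $H$ are $p$-groups, eliminate every other possibility by exhibiting an induced $K_{1,3}$.

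For sufficiency in case (i) with $H=G$: if $|G|=p^\alpha$ then $\1\cong K_{|G|-1}$ by Lemma~\ref{11.1}(iii), which is claw-free. If $|G|=p_1^{\alpha_1}p_2^{\alpha_2}$ with both exponents positive, I partition the non-identity elements of $G$ into $A=\{x:\theta_x=\{p_1\}\}$, $B=\{x:\theta_x=\{p_2\}\}$, and $C=\{x:\theta_x=\{p_1,p_2\}\}$. Each class is a clique (a common prime divides every pair of orders), $A$--$C$ and $B$--$C$ are complete bipartite, and no edges run between $A$ and $B$; hence the independence number of $\1$ is at most two, so no vertex can be the centre of a $K_{1,3}$. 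For case (ii) there are three small graphs to check: $G\cong\mathbb Z_4$ with $H\cong\mathbb Z_2$ gives $P_3$ (Theorem~\ref{11}(ii)); $G\cong\mathbb Z_6$ with $H\cong\mathbb Z_2$ gives a $P_3$ together with two isolated vertices; and $G\cong\mathbb Z_6$ with $H\cong\mathbb Z_3$ gives $K_4$ minus an edge together with one isolated vertex. None contains an induced $K_{1,3}$.

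For necessity, assume $\1$ is claw-free. If $H=G$ and $|G|$ has three distinct prime divisors $p_1,p_2,p_3$, pick $x$ of order $p_1p_2p_3$ and $y_i$ of order $p_i$ (all exist because $G$ is cyclic); then $\{x,y_1,y_2,y_3\}$ induces a claw, a contradiction. Next, if $H\neq G$ and $G$ is a $p$-group, Lemma~\ref{11.1} gives $\1\supseteq K_{|H|-1,|G\setminus H|}$ with $G\setminus H$ an independent set, so three neighbours of a vertex in $H$ chosen from $G\setminus H$ would form a claw; hence $|G\setminus H|\leq 2$, which in a cyclic $p$-group with $H\subsetneq G$ forces $p=2$, $|G|=4$, $|H|=2$. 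If $H\neq G$, $G$ is not a $p$-group, but $H$ is a $p$-group for some prime $p$, pick $x\in H\setminus\{e\}$; its neighbours in $G\setminus H$ are precisely the $y\in G\setminus H$ with $p\mid|y|$, and these are pairwise non-adjacent. Writing $|G|=p^\alpha m$ with $\gcd(p,m)=1$ and $|H|=p^k$, this set has cardinality $m(p^\alpha-1)-(p^k-1)$; demanding this be at most two and using $m\geq 2$ pins down exactly $|G|=6$ with $H\cong\mathbb Z_2$ or $\mathbb Z_3$. Finally, if neither $G$ nor $H$ is a $p$-group and $H\neq G$, then $|H|\geq 6$ and $|G|\geq 2|H|\geq 12$; pick $x\in H$ with $|x|=|H|$. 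The $\phi(|G|)\geq\phi(12)=4$ elements of order $|G|$ all lie in $G\setminus H$, are adjacent to $x$, and are pairwise non-adjacent, producing a claw.

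The main obstacle is the third necessity case (where $H$ is a $p$-group and $G$ is not): one must convert the informal bound ``not too many elements outside $H$ have order divisible by $p$'' into the sharp inequality $m(p^\alpha-1)\leq p^k+1$ and then verify that its only admissible solutions, subject to $1\leq k\leq\alpha$, $m\geq 2$, and $\gcd(m,p)=1$, correspond to $|G|=6$. A minor subtlety is that condition (i) permits $\alpha_i=0$, so $|G|=p_1^{\alpha_1}p_2^{\alpha_2}$ silently includes prime powers, and the sufficiency argument must treat that sub-case separately via Lemma~\ref{11.1}(iii).
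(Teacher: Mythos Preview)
Your proof is correct and follows essentially the same strategy as the paper. The one notable difference is in the necessity direction when $H\neq G$: you split into three sub-cases according to whether $G$ and $H$ are $p$-groups, whereas the paper dispatches all of these at once by observing that whenever $H\neq G$ and $|G|\geq 8$ one has $\phi(|G|)\geq 4$, so any $w\in H\setminus\{e\}$ together with three generators of $G$ (which necessarily lie in $G\setminus H$ and are pairwise non-adjacent) already induces a $K_{1,3}$. This single argument subsumes your cases 2--4 and immediately forces $|G|\in\{4,6\}$, avoiding the arithmetic inequality $m(p^\alpha-1)\leq p^k+1$ entirely. (A small stylistic remark: writing ``$\phi(|G|)\geq\phi(12)=4$'' suggests monotonicity of $\phi$, which is false; the correct reason is simply that $\phi(n)\leq 2$ only for $n\in\{1,2,3,4,6\}$.)
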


\begin{proof}
Suppose (i) is true. Let $S=\{u,v,x,y\}\se G \sm \{e\}.$ Due to the possibilities for orders of elements in $S,$ either $C_3$ is a subgraph of $\langle S\rangle$ or $\langle S\rangle=2K_2$ in $\1$ and hence $\1$ is claw-free.
 
Assume that (ii) is true. Then $H\neq G.$ If $G\cong \mathbb Z_4,$ then $H\cong \mathbb Z_2$ and so $\1=P_3.$ If $G\cong \mathbb Z_6$ and $H=\mathbb Z_2,$ then $\1=2K_1 \cup P_3.$ If $G\cong \mathbb Z_6$ and $H=\mathbb Z_3,$ then $\1$ is a disconnected graph containing contains at least two vertices of degree 3. In all these cases, $\1$ is claw-free.

Conversely assume that $\1$ is claw-free. Then $|G|$ cannot be a prime number when $H\neq G.$ Let us take $|G|=p_1^{\a_1}p_2^{\a_2} \ldots p_r^{\a_r}$ where $p_i$ are distinct primes and $\a_i$ are positive integers for $1\leq i\leq r.$ To complete the proof, it is enough to show that $\1$ is not claw-free when one of 
either (1) $H=G$ with $r \geq 3$, or (2) $H \neq G $ with $|G| \geq 8$.

(1) Assume that $H=G$ with $r \geq 3.$ Choose $x,y,z \in G$ such that $|x|=p_1$,
$|y|=p_2$, and $|z|=p_3.$ If $w \in G$ such that $|w|=|G|$, then
$\langle\{w,x,y,z \}\rangle=K_{1,3}\subseteq \1$ and so $\1$ is not claw-free.

(2) Assume that $  H \neq G $ with $|G| \geq 8.$ Then $\phi(|G|) \geq 4$. Choose $x,y,z \in G$ such that $|x|=|y|=|z|=|G|$ and $w \in H\sm \{ e\}$. Then $\langle\{w,x,y,z \}\rangle=K_{1,3}\subseteq  \1$ and so $\1$ is not claw-free.	
\end{proof} 

A graph $G$ is said to be a {\it chordal graph} if every cycle of length four or more  in $G$ contains a  chord. In the following theorem, we obtain a characterization for the graph $\1$ to be chordal.

\begin{theorem} \label{12} Let \(G\) be a finite cyclic group of order $\geq 5$ and \(H\neq \{e\}\) be a subgroup of $G$. Then the generalized non-coprime graph $\1$ is chordal if and only if either $H$ is a $p$-group or $H$ is not a $p$-group, $H=G$, and $|G| \leq  p_1^{\a_1}p_2^{\a_2}p_3^{\a_3}$.
\end{theorem}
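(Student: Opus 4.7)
The plan is to prove both directions by treating the two conditions separately, reducing the subtle case (when $H=G$ has at most three prime divisors) to an explicit perfect elimination ordering on the non-coprime graph.

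For the \emph{forward direction}, I argue the contrapositive: if $H$ is not a $p$-group and not of the stated form, then $\Gamma_{G,H}$ contains an induced $C_4$. There are two subcases. First, if $H$ is not a $p$-group with $H\neq G$, choose distinct primes $p_i,p_j\mid |H|$ and pick $x,y\in H$ with $|x|=p_i$ and $|y|=p_j$; since $|G|\geq 5$ forces $\phi(|G|)\geq 2$ in this situation, I take two distinct generators $u,v$ of $G$, which automatically lie in $G\setminus H$. Then $\langle\{x,u,y,v\}\rangle$ is an induced $C_4$ (the pairs $\{x,y\}$ and $\{u,v\}$ are non-adjacent, while the four ``cross'' pairs are edges), contradicting chordality. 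Second, if $H=G$ and $r\geq 4$, pick elements $a,b,c,d\in G$ of orders $p_1p_2,\ p_2p_3,\ p_3p_4,\ p_4p_1$ respectively; consecutive vertices share a prime while opposite vertices have coprime orders, producing another induced $C_4$.

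For the \emph{reverse direction}, I handle the two hypotheses in turn. If $H$ is a $p$-group with $H=G$, then Lemma~\ref{11.1}(iii) gives $\Gamma_{G,G}\cong K_{|G|-1}$, which is chordal; if $H$ is a $p$-group with $H\neq G$, Lemma~\ref{11.1}(i) shows that $\Gamma_{G,H}$ is a split graph (clique $H\setminus\{e\}$, independent set $G\setminus H$), and every split graph is chordal. In the remaining case, $H=G$ with $r\leq 3$ and $H$ not a $p$-group, I construct a perfect elimination ordering of $\Gamma_{G,G}$: for $i=1,\ldots,r$, successively remove all vertices $x$ with $\theta_x=\{p_i\}$. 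At each stage such an $x$ has neighborhood $\{y : p_i\in\theta_y\}$, which is a clique since any two such $y$ share $p_i$; hence $x$ is simplicial. After these removals the surviving $\theta$-classes are the subsets of $\{p_1,\ldots,p_r\}$ of size at least $2$; since $r\leq 3$ any two such subsets intersect, so the remaining graph is a clique and may be eliminated in any order, giving a PEO and therefore chordality.

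The \emph{main obstacle} is the $r\leq 3$ case in the reverse direction: one must identify the right elimination order and verify simpliciality at each stage. The crucial combinatorial fact powering the whole argument is that any two $\geq 2$-subsets of $\{p_1,p_2,p_3\}$ must intersect -- precisely what fails for $r=4$, and also exactly what allows the bad $4$-cycle on $\{p_1p_2,\,p_2p_3,\,p_3p_4,\,p_4p_1\}$ to be constructed in the forward direction.
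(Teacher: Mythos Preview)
Your proof is correct. The forward direction matches the paper's argument essentially verbatim (same two induced $C_4$'s). The reverse direction, however, is genuinely different and cleaner than the paper's. For $H$ a $p$-group with $H\neq G$, the paper argues directly that any putative induced cycle of length $\geq 4$ must contain a chord, by tracking how many of its vertices lie in $H\setminus\{e\}$ and using completeness of that set; you instead invoke Lemma~\ref{11.1}(i) to observe the graph is split, hence chordal, which is a one-line argument. For $H=G$ with $r\leq 3$, the paper does an explicit case analysis on the possible $\theta$-values appearing in a hypothetical induced cycle $S$, ruling out each configuration; your perfect elimination ordering (peel off the prime-power vertices, then observe the residue is a clique because any two $\geq 2$-subsets of a $3$-set meet) is more structural and makes the role of the bound $r\leq 3$ transparent. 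Both approaches yield the same conclusion, but yours is shorter and exposes the underlying combinatorics more clearly; the paper's has the minor advantage of being self-contained without appealing to the PEO characterization of chordal graphs.
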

\begin{proof} Assume that $H$ is a $p$-group.

If $H = G,$ then by Lemma~\ref{11.1}(iii), $\1$ is a complete graph and so $\1$ is chordal. If $H \neq G$ and $H \cong \mathbb Z_2,$ then $\1$ is star graph and so $\1$ is chordal. 

If $H \neq G$ and $H \ncong \mathbb Z_2,$ then $\langle H\setminus \{e\}\rangle$  is a complete subgraph. Suppose $S \subset G,$ $|S|\geq 4$ and $\langle S\rangle$ is a cycle in $\1$. Since $\langle G \setminus H\rangle$ is a totally disconnected subgraph of $\1$, $|S \cap (H \setminus \{e\}) |=\frac{|H|-1}{2}\geq2$. 

If $|S \cap (H \setminus \{e\}) |=2$, then $x,y \in S \cap (H \setminus \{e\})$ and $x\neq y$. Since $H$ is a $p$-group, $x,y$ are adjacent in $\1$. Since $\langle S\rangle$ is a cycle, there exists $z \in S \cap (G\setminus \{e,x,y\})$ such that $x$ is adjacent to $z$ and so $(|x|,|z|)\geq p$. Note that $(|x|,|y|)\geq p$ as $H$ is a $p$-group. This implies that $y$ is also adjacent to $z$ in $\1$ and so $\langle x,y,z\rangle=C_3$ in $\langle S\rangle.$ Hence $\1$ is a chordal graph. 

If $|S \cap (H \setminus \{e\}) |\geq 3$, then $\langle S\rangle$ contains $C_3$ as a subgraph of $\1$ by $\langle H \setminus \{e\}\rangle$ is a complete subgraph and so $\1$ is a chordal graph. 
	
Assume that $H$ is not a $p$-group, $H=G$, and $|G|\leq  p_1^{\a_1}p_2^{\a_2}p_3^{\a_3}.$ Suppose $S \s G\sm \{e\}, |S|\geq 4$ and  $\langle S\rangle$ is a cycle in $\1.$ Let $x \in G$. Then one of the following is true:

(i) $\theta_x=\{p_1 \}$; (ii) $\theta_x=\{p_2 \}$; (iii) $\theta_x=\{p_3 \}$; (iv) $\theta_x=\{ p_1,p_2\}$; (v) $\theta_x=\{p_1,p_3 \}$; (vi) $\theta_x=\{p_2,p_3 \}$; and (vii) $\theta_x=\{p_1,p_2,p_3 \}$. 

If $x \in S$ such that $\theta_x=\{p_1,p_2,p_3 \},$ then $x$ is adjacent to all other vertices in $\1$ and so $\deg(x) = |G|-2 \geq 3 $ in $\langle S\rangle$ which is a contradiction to $\langle S\rangle$ being a cycle. Thus $x\notin S$. If $x \neq y \in S$ with $\theta_x=\theta_y$, then $C_3$ is a subgraph of $\langle S\rangle$. This turns that $S \subseteq \{u,v,w,x,y,z \} \subset G$ such that  $|u|=p_1$, $|v|=p_2$, $|w|=p_3$, $|x|=p_1p_2$, $|y|=p_1p_3$ and $|z|=p_2p_3$. Since $\langle \{x,y,z\}\rangle=C_3,$ we have $|S \cap \{x,y,z\}|\leq 2$ and so $\langle S\rangle \neq C_6$ in $\1$. Since $\langle \{u,v,w\}\langle=\overline{K}_3$, we have $|S \cap \{u,v,w\}|\leq 2$. This implies that $\langle S\rangle=P_4$ if $|S|=4$, which is a contradiction to $\langle S\rangle=C_{|S|}$ with $|S|\geq 4$. These facts imply that for $x,y\in S$, $\theta_x$ contains at most two primes and $\theta_x\neq \theta_y$ for $x\neq y$. Hence $\1$ is a chordal graph.

Conversely assume that $\1$ is a chordal graph. Suppose that $H=G$ and $r \geq 4$, where $r$ is the number of prime divisors of $|G|$. Choose distinct elements $u,v,x,y \in G$ such that $|u|=p_1p_2$, $|v|=p_2p_3$, $|x|= p_3p_4 $ and $|y|=p_1p_4$. Then $\langle\{u,v,x,y \}\rangle=C_4$ in $\1$, which is a contradiction. Hence $r\leq 3$ whenever $H=G$.

Suppose $H \neq G $ and $H$ is not a $p$-group. Then there exist at least two distinct primes $p_1$ and $p_2$ dividing $|H|$. Trivially $p_i\mid |G|$ for $i=1,2$. Hence $\phi(|G|) \geq 2$. Choose distinct elements $u,v,x,y \in G$ such that $|u|=p_1$, $|v|=p_2$ and $|x|=|y|=|G|$. Then $\langle\{u,v,x,y \}\rangle=C_4$ in $\1,$ which is a contradiction.  Hence $H$ is a $p$-group.

Hence $\1$ is chordal if and only if either $H$ is a $p$-group or $H = G$ with $r \leq 3$. 
\end{proof}

\subsection{Perfect characterization for the non-coprime graph}
A graph $\Gamma$ is perfect if the clique number and chromatic number of
every induced subgraph of $\Gamma$ are equal to one another. By the
\emph{Strong Perfect Graph Theorem}~\cite{CRST}, a graph is perfect if
and only if it does not contain an odd cycle of length greater than~$3$
or the complement of one as an induced subgraph. We also note the earlier
\emph{Weak Perfect Graph Theorem} of Lov\'asz~\cite{lovasz}, which asserts
that the complement of a perfect graph is perfect.

In this section, we will determine which generalized non-coprime graphs
of cyclic groups are perfect.

The proof is not straightforward, and we need a couple of tools.

The most important tool is \emph{twin reduction}, described in detail in
\cite[Section 7]{c:survey}. Two vertices $v,w$ of a graph $\Gamma$ are
\emph{twins} if they have the same neighbours except possibly for one
another. (We call $v$ and $w$ open twins if their open neighbourhoods are
equal, and closed twins if their closed neighbourhoods are equal.) The
process of twin reduction is the following. Choose a pair of twins, and
identify them (equivalently, delete one); repeat. It is shown in the
cited reference that if twin reduction is carried out until no further
twins remain, the resulting graph is (up to isomorphism) independent of
the way the twin reduction was carried out.

The important fact for us is the following obvious proposition.

\begin{proposition}
Let $\Gamma$ and $\Delta$ be graphs, such that $\Delta$ contains no pairs
of twin vertices, and let $\Gamma'$ be obtained from $\Gamma$ by a sequence
of twin reductions. Then $\Delta$ is an induced subgraph of $\Gamma'$ if
and only if it is an induced subgraph of $\Gamma$.
\end{proposition}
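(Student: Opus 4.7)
The plan is to prove both directions of the biconditional separately. The forward direction, that any induced subgraph of $\Gamma'$ is also one of $\Gamma$, is immediate: each individual twin reduction only deletes a vertex, so $\Gamma'$ is (up to isomorphism) an induced subgraph of $\Gamma$, and the induced-subgraph relation is transitive.

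For the reverse direction I would induct on the number of single twin-reductions performed, so it suffices to prove the following one-step claim: if $\Gamma_1$ is obtained from $\Gamma$ by deleting one vertex $v$ of a twin pair $\{u,v\}$, and $\Delta$ contains no pair of twin vertices, then any induced-subgraph embedding $f\colon V(\Delta)\to V(\Gamma)$ can be converted into one landing inside $V(\Gamma_1)$. I would split into three cases according to whether $u$ and $v$ lie in $f(V(\Delta))$. If $v\notin f(V(\Delta))$, then $f$ already maps into $V(\Gamma_1)$. If $v\in f(V(\Delta))$ but $u\notin f(V(\Delta))$, define $f'$ to agree with $f$ off $f^{-1}(v)$ and send $f^{-1}(v)$ to $u$; since $u$ and $v$ are twins in $\Gamma$, every adjacency between $f(c)$ (for $c\neq f^{-1}(v)$) and the image is preserved, so $f'$ is an induced-subgraph embedding into $\Gamma_1$.

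The main obstacle, and the only place where the twin-free hypothesis on $\Delta$ is actually used, is the third case, in which both $u$ and $v$ belong to $f(V(\Delta))$. Writing $a=f^{-1}(u)$ and $b=f^{-1}(v)$, for every other vertex $c\in V(\Delta)$ the image $f(c)$ is distinct from both $u$ and $v$, and the twin relation in $\Gamma$ yields $f(c)\sim u$ if and only if $f(c)\sim v$. Pulling this back through $f$ gives $c\sim a$ if and only if $c\sim b$ in $\Delta$, so $a$ and $b$ are twins in $\Delta$, contradicting the hypothesis; hence the third case cannot occur. Iterating this single-step argument along the sequence of twin reductions that produces $\Gamma'$ from $\Gamma$ completes the proof.
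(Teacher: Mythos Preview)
Your proof is correct. The paper does not actually give a proof of this proposition: it simply introduces it as ``the following obvious proposition'' and moves on. Your argument spells out precisely what the paper leaves implicit, and the case analysis (in particular the observation that if both members of a twin pair of $\Gamma$ lie in the image of $\Delta$ then their preimages would be twins in $\Delta$) is exactly the reason the hypothesis on $\Delta$ is needed.
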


The relevance for us is that cycles of length greater than $4$, and
their complements, contain no pairs of twins; so in deciding whether a
graph is perfect, we may apply twin reduction without changing the result.
(Cycles of lengths~$3$ and $4$ contain twin vertices, and so our method
cannot detect these.)

The first application of twin reduction to generalized non-coprime graphs 
$\Gamma_{G,H}$ of cyclic groups comes from the fact that two vertices of
such a graph which have the same order are twins. This is because their orders
are not coprime (as we have excluded the identity) and if one of them is in
$H$ then so is the other.

There is a more specific reduction which will also be useful. Let $\Gamma$
be a graph, and $v$ a vertex of $\Gamma$. If $v$ satisfies any of the
following conditions, then $v$ cannot be contained in an induced subgraph
which is an odd cycle or the complement of one:
\begin{itemize}
\item $\deg(v)<2$ or $|V(\Gamma)-1-\deg(v)|<2$;
\item the neighbourhood of $v$ is a complete graph, or the non-neighbourhood
of $v$ is a null graph.
\end{itemize}
This is clear for the first condition, since a vertex of a cycle of length
at least~$5$ has degree and codegree at least $2$. For the second condition,
note that, in an induced cycle of length at least~$5$, the neighbours of a
vertex are nonadjacent, while the set of non-neighbours contains an edge.

\begin{theorem}
Let $G$ be a cyclic group of order $p_1p_2p_3p_4$, where $p_1,\ldots,p_4$
are distinct primes, and let $H\ne\{e\}$ be a subgroup of $G$. Then the
generalized non-coprime graph $\Gamma_{G,H}$ is perfect. Moreover, in all
cases except when $|H|$ is the product of three primes, $\Gamma_{G,H}$ 
contains no cycles of length greater than~$4$ or complements of such cycles.
\label{t:4primes}
\end{theorem}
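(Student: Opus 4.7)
The plan is to apply twin reduction and then analyse the resulting $15$-vertex graph case-by-case on $|T_H|$, the prime support of $|H|$. Two non-identity elements of $G$ of the same order are twins in $\Gamma_{G,H}$, since their gcd with any third order agrees and, $G$ being cyclic, membership in $H$ depends only on order. After twin reduction $\Gamma_{G,H}$ becomes a graph $\Gamma'$ whose vertices are the $15$ divisors $d>1$ of $n=p_1p_2p_3p_4$, identified with nonempty subsets $S\subseteq\{1,2,3,4\}$ via prime support. Two vertices $S,T$ are adjacent in $\Gamma'$ iff $S\cap T\neq\emptyset$ and either $S\subseteq T_H$ or $T\subseteq T_H$. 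By the proposition just stated, induced $C_k$ and $\overline{C_k}$ with $k\geq 5$ exist in $\Gamma_{G,H}$ iff they exist in $\Gamma'$, so all subsequent analysis takes place in $\Gamma'$.

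When $|T_H|\in\{1,2\}$ a further twin reduction collapses $\Gamma'$ to at most $7$ vertices whose non-isolated part is easily seen to be bipartite (after, in the $|T_H|=2$ case, deleting its unique dominating vertex $T_H$); hence $\Gamma'$ is perfect and contains no induced cycle of length $\geq 5$ or complement of one. When $|T_H|=4$ (i.e.\ $H=G$), $\Gamma'$ is the pure intersection graph on nonempty subsets of $\{1,2,3,4\}$. Here the full set and the four $3$-subsets have codegree at most $1$, and each singleton has a triangular neighbourhood, so none can lie in any induced $C_k$ or $\overline{C_k}$ with $k\geq 5$; deleting them leaves the octahedron $K_{2,2,2}$ on the six $2$-subsets, and a direct check on this $6$-vertex graph rules out $C_5$, $C_6$, $\overline{C_5}$, $\overline{C_6}$, which are the only candidates.

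The delicate case is $|T_H|=3$, where one must show only perfectness. By the Strong Perfect Graph Theorem it suffices to exclude induced odd holes and odd antiholes of length $\geq 5$. The non-$H$ vertices of $\Gamma'$ form an independent set, so any induced $C_k$ has at most $\lfloor k/2\rfloor$ of them and at least $\lceil k/2\rceil$ $H$-vertices, which are nonempty subsets of $\{1,2,3\}$. A case analysis on how many non-$H$ vertices appear and where they sit forces the participating $H$-subsets either to collectively cover all of $\{1,2,3\}$ (leaving no room for a further required nonempty vertex disjoint from them) or to duplicate. For odd antiholes, two non-$H$ vertices can only appear at consecutive positions of the underlying $C_k$, bounding their number by $2$; moreover the $H$-vertex $\{1,2,3\}$, being universal in the $H$-subgraph, cannot participate, and the intersection graph on the proper nonempty subsets of $\{1,2,3\}$ admits no induced $P_5$. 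These observations dispose of each antihole case. The main obstacle is organising the $|T_H|=3$ case analysis so that every sub-configuration of positions and subsets is covered.
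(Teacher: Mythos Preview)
Your overall framework matches the paper's: reduce by twins to the $15$-vertex subset graph $\Gamma'$, then analyse by the size of the prime support $L=T_H$ of $H$. For $|L|\in\{1,2,4\}$ your treatment is essentially the paper's, including the reduction of the $|L|=4$ case to the octahedron $K_{2,2,2}$ after stripping the singletons and the sets of size $\ge 3$.

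The genuine divergence is at $|L|=3$. You propose a direct Strong Perfect Graph Theorem argument, bounding the number of non-$H$ vertices in a putative odd hole or antihole and then running a configuration analysis. The ingredients you list are correct (independence of non-$H$ vertices; universality of $\{1,2,3\}$ among the $H$-vertices; no induced $P_5$ in the intersection graph on proper nonempty subsets of $\{1,2,3\}$), and in principle this can be pushed through, but as you yourself flag, organising all sub-configurations is the real work and your sketch does not yet do it.

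The paper takes a much shorter route here, using exactly the pruning rule you already invoke at $|L|=4$: a vertex with degree or codegree less than $2$, or with complete neighbourhood, or with null non-neighbourhood, cannot lie in any induced $C_k$ or $\overline{C_k}$ for $k\ge 5$. Applying this iteratively in the $|L|=3$ case removes nine of the fifteen vertices (in order: $x_4$ isolated; $x_{123}$ universal; $x_{14},x_{24},x_{34}$ with complete neighbourhoods; $x_{1234}$ with null non-neighbourhood; then $x_{12},x_{13},x_{23}$ with codegree $1$). The six survivors $x_1,x_{124},x_2,x_{234},x_3,x_{134}$ form an induced $6$-cycle, which is bipartite and hence perfect, and contains no $C_5=\overline{C_5}$. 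This single computation replaces your entire case analysis, and the exhibited $6$-cycle is precisely what accounts for the exceptional clause in the theorem. Since you already have and use this pruning rule, applying it at $|L|=3$ as well would make your argument both complete and considerably shorter.
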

\begin{proof}
As noted above, two vertices of $\Gamma_{G,H}$ which have the same order are
twins, so we may begin by collapsing all elements of each given order to a
single vertex. This graph $X$ can be regarded as an induced subgraph of
$\Gamma_{G,H}$.

There are fifteen possible orders of elements in $G$, namely the fifteen
products of non-empty subsets of $\{1,2,3,4\}$. We denote the collapsed graph
by $X$. The vertices $x_I$ of $X$ are labelled by non-empty subsets $I$ of
$\{1,2,3,4\}$. We ease notation by writing, for example, $x_{12}$ for
$x_{\{1.2\}}$.
The intersection $X\cap H$ is $\{X_J:J\subseteq L\}$, where $L$ is the set
$\{i:p_i\mid|H|\}$. Two vertices $x_J$ and $x_K$ are joined if and only if
$J\cap K\ne\emptyset$ and at least one of $I$ and $J$ is contained in $L$.
Moreover, there is complete symmetry among $1,2,3,4$.
So, up to isomorphism, there are only four possibilities for the induced
subgraph on $X$, corresponding to $L=\{1,\ldots,i\}$ for $i=1,\ldots,4$.

To prove the theorem, we have to show that these graphs or their complements
contain no induced cycle of length greater than or equal to $5$, except in the case
where $|L|=3$ (when there is a $6$-cycle). We consider the four cases.

{\bf Case 1.} $|L|=1,$ say $L=\{1\}$. Then the graph is a star $K_{1,7}$
together with seven isolated vertices, and the result is clear. Alternatively,
continuing with twin reduction reduces this graph to a single vertex. (We
collapse the seven leaves, and the seven isolated vertex, giving an edge and
an isolated vertex. Then we collapse the two ends of the edge, and finally the
remaining two vertices.)

{\bf Case 2.} $|L|=2,$ say $L=\{1,2\}$. The three vertices $x_I$ with
$I\subseteq L$ induce a path. The remaining $12$ vertices fall into four
sets of three depending on the intersection $I\cap L$. Vertices within the
same set of three are twins. Collapsing them gives the graph shown in 
Figure~\ref{f:f1}.

\begin{figure}[htbp]
\begin{center}
\setlength{\unitlength}{2mm}
\begin{picture}(24,18)
\multiput(0,0)(10,0){3}{\circle*{1}}
\multiput(0,8)(10,0){3}{\circle*{1}}
\put(10,16){\circle*{1}}
\put(0,8){\line(1,0){20}}
\put(0,0){\line(0,1){8}}
\multiput(0,0)(0,8){2}{\line(5,4){10}}
\multiput(20,0)(0,8){2}{\line(-5,4){10}}
\put(20,0){\line(0,1){8}}
\put(10,8){\line(0,1){8}}
\put(-2,9){$\scriptstyle{x_{1}}$}
\put(11,9){$\scriptstyle{x_{12}}$}
\put(21,9){$\scriptstyle{x_{2}}$}
\put(11,17){$\scriptstyle{x_{123}}$}
\put(-3,0){$\scriptstyle{x_{13}}$}
\put(11,0){$\scriptstyle{x_{3}}$}
\put(21,0){$\scriptstyle{x_{23}}$}
\end{picture}
\end{center}
\caption{\label{f:f1}A reduced graph}
\end{figure}
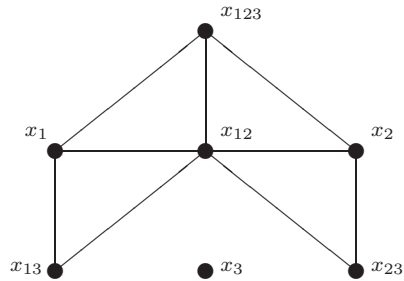

We can delete the isloated vertex, and then the central vertex (which is
connected to all of the remaining vertices), leaving a $5$-vertex path.
So there are no cycles of length greater than~$4$, and the graph is perfect.

{\bf Case 3.} $|L|=3,$ say $L=\{1,2,3\}$. In this case the graph has no
twins, but can still be pruned.

Vertex $x_4$ is isolated, and can go; then $x_{123}$ is joined to all
others and can go. Next, $x_{14}$ has neighbourhood which is complete, and
so can go, and similarly $x_{24}$ and $x_{34}$. Then $x_{1234}$ has
non-neighbourhood which is null, and so can go. Finally, $x_{12}$ is joined
to all vertices except $x_3$, and can go; similarly $x_{13}$ and $x_{23}$.
The remaining six vertices form a $6$-cycle.

Returning to the graph $\Gamma_{G,H}$, we see that, if $G$ is cyclic of order
$p_1p_2p_3p_4$, where $p_1,\ldots,p_4$ are distinct primes, and $H$ is the
subgroup of order $p_1p_2p_3$, then elements of $G$ with orders
\[p_1,p_1p_2p_4,p_2,p_2p_3p_4,p_3,p_1p_3p_4\]
form a $6$-cycle.

{\bf Case 4.} $L=\{1,2,3,4\}.$ Then $x_I$ and $x_J$ are joined if and only
if $I\cap J\ne\emptyset$. So, if $|I|\ge3$, then $x_I$ is nonadjacent to at
most one vertex, and can be deleted. Also, if $|I|=1$, then the neighbourhood
of $x_I$ is a complete graph, so these vertices can also be deleted. The 
remaining six vertices (with $|I|=2$) induce a complete tripartite graph
$K_{2,2,2}$, which contains no cycle of length greater than~$4$.
\end{proof}

By using the arguments used in the proof of Theorem~\ref{t:4primes}, we have the following theorem.
\begin{theorem} \label{9.03} Let \(G\) be a finite cyclic group and \(H\neq \{e\}\) be a subgroup of $G.$ Then length of any induced odd cycle is at most $3$ in both the generalized non-coprime graph $\1$ and its complement $\c$ if $H$ satisfies any one of the following:
\begin{enumerate}		 
\item[\rm	(a)] $|H|\leq|G|\leq p_1^{\a_1}p_2^{\a_2}p_3^{\a_3};$
\item[\rm	(b)] $|G|=p_1^{\a_1}p_2^{\a_2}\cdots p_r^{\a_r}$ with $r \geq 5$ and $|H|\leq p_i^{\a_i}p_j^{\a_j}p_k^{\a_k}$ with $i, j,k$ are distinct. 
\end{enumerate}		 
\end{theorem}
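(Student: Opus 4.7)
I mirror the proof of Theorem~\ref{t:4primes} and show that $\Gamma_{G,H}$ is a perfect graph; by the Strong Perfect Graph Theorem this is equivalent to the desired claim about induced odd cycles in $\Gamma_{G,H}$ and its complement $\overline{\Gamma}_{G,H}$.

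\textbf{Twin reduction.} Elements of $G$ with the same order form a twin class, because $G$ cyclic forces all $\phi(d)$ elements of order $d$ to lie in $H$ (if $d\mid|H|$) or all outside $H$ (otherwise). A second round merges two orders with the same prime support $\theta$ and the same divisibility status with respect to $|H|$; a third round merges any two ``not in $H$'' vertices whose prime supports meet $L=\{i:p_i\mid|H|\}$ in the same subset $S$, which are open twins since non-$H$ vertices are mutually non-adjacent and therefore interact only with $H$-vertices. Neither an induced odd cycle of length at least $5$ nor its complement contains a twin pair, so these reductions preserve the property we want to establish.

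\textbf{The reduced graph.} What results is an induced subgraph of the canonical graph $\Delta_L$ on vertex set $\{x_J:\emptyset\neq J\subseteq L\}\cup\{y_S:\emptyset\neq S\subseteq L\}$, with $x_J\sim x_{J'}$ iff $J\cap J'\neq\emptyset$, $x_J\sim y_S$ iff $J\cap S\neq\emptyset$, and the $y_S$ pairwise non-adjacent. Both hypotheses (a) and (b) force $|L|\le 3$, so $\Delta_L$ has at most $14$ vertices and its isomorphism type depends only on $|L|$. The cases $|L|=1,2$ reduce, after removing the universal vertex $x_L$, to a single edge or a path on five vertices. For $|L|=3$ we prune successively: $x_{\{1,2,3\}}$ (universal), then $y_{\{1,2,3\}}$ (whose non-neighbourhood in $\Delta_L$ is the independent set of remaining $y_S$), then each singleton $y_i$ (whose neighbourhood $\{x_i,x_{ij},x_{ik}\}$ is a clique), then each doubleton $x_{ij}$ (which has codegree one in the current graph). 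Each removed vertex satisfies a standard criterion forbidding it from lying on an induced $C_k$ or $\overline{C_k}$ for any $k\ge 5$, so pruning preserves perfectness. The six surviving vertices $\{x_1,x_2,x_3,y_{12},y_{13},y_{23}\}$, with $x_i\sim y_{jk}$ iff $i\in\{j,k\}$, induce a $C_6$---bipartite, hence perfect---completing the argument.

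\textbf{Main obstacle.} The delicate point is confirming that each pruning rule (clique neighbourhood, independent non-neighbourhood, codegree at most one) validly eliminates both odd holes and odd antiholes of length at least $5$, so that perfectness of the final $C_6$ genuinely lifts to all of $\Delta_L$. Once those local criteria are verified, the remaining case-work is routine.
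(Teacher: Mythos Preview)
Your proof is correct and follows the same twin-reduction-and-pruning template that the paper uses in Theorem~\ref{t:4primes}; indeed, the paper's own proof of Theorem~\ref{9.03} consists solely of the sentence ``By using the arguments used in the proof of Theorem~\ref{t:4primes}, we have the following theorem'', so you have supplied the details the authors omitted.

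The one genuinely new ingredient in your write-up, relative to the paper's squarefree four-prime argument, is the third twin-reduction step: merging two non-$H$ vertices whose prime supports have the same intersection with $L=\{i:p_i\mid|H|\}$. This is exactly what is needed to make the argument go through when $|G|$ is not squarefree (case~(a) with some $\alpha_i\ge 2$) or when there are primes outside $L$ (case~(b) with $r\ge 5$), and it reduces both cases uniformly to the single graph $\Delta_L$ with $|L|\le 3$. In the paper's Theorem~\ref{t:4primes} this step is unnecessary because $|G|=p_1p_2p_3p_4$ forces one vertex per prime support already; your version is the natural extension. The pruning criteria you invoke (universal vertex, clique neighbourhood, independent non-neighbourhood, codegree at most one) are precisely the ones stated and justified in the paper just before Theorem~\ref{t:4primes}, and they are self-dual under complementation, so they correctly exclude participation in both odd holes and odd antiholes of length at least~$5$. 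Since every induced subgraph of a perfect graph is perfect, the passage from $\Delta_L$ back to the actual reduced graph (which may omit some $x_J$ or $y_S$) is harmless.
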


\begin{theorem} \label{9.04}
Let \(G\) be a finite cyclic group and \(H\) be a non-trivial subgroup of $G$. 
Then the generalized non-coprime graph  $\1$ is a perfect graph if and only if $H$ satisfies any one of the following:
\begin{enumerate}		 
\item[\rm	(a)] $|H|=|G|=p_1p_2 p_3 p_4;$ 
\item[\rm	(b)] $|H|\leq|G|= p_1^{\a_1}p_2^{\a_2}p_3^{\a_3};$
\item[\rm	(c)] $|G|=p_1^{\a_1}p_2^{\a_2}\cdots p_r^{\a_r}$ with $r \geq 5$ and $|H|\leq p_i^{\a_i}p_j^{\a_j}p_k^{\a_k}$ with $i, j,k$ are distinct. 
\end{enumerate}
\end{theorem}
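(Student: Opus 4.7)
My plan is to apply the Strong Perfect Graph Theorem in both directions, reducing perfection to the non-existence of induced odd cycles of length $\geq 5$ in $\1$ and in its complement $\c$. For sufficiency, case~(a) follows directly from Theorem~\ref{t:4primes}, while cases~(b) and~(c) are precisely the hypotheses of Theorem~\ref{9.03}, which guarantees that every induced odd cycle in $\1$ and in $\c$ has length at most~$3$; the Strong Perfect Graph Theorem then yields perfection.

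For necessity I would argue the contrapositive. Let $r$ and $s$ denote the number of distinct prime divisors of $|G|$ and $|H|$ respectively. The configurations falling outside (a)--(c) split into two cases: (I)~$r=4$ with $|H|<|G|$ or with some $\a_i\geq 2$, and (II)~$r\geq 5$ with $s\geq 4$. I would handle case~(II) first, as the construction is cleanest. Assume without loss of generality that $p_1,p_2,p_3,p_4\mid|H|$ and $p_5\mid|G|$, and pick elements $v_1,\ldots,v_5\in G$ with orders
\[p_2p_3,\quad p_1p_2p_5,\quad p_1,\quad p_1p_4p_5,\quad p_3p_4\]
respectively. Then $v_1,v_3,v_5\in H$ while $v_2,v_4\in G\sm H$, and a direct check of the ten relevant prime-intersection conditions shows that the consecutive pairs $v_iv_{i+1}$ are edges of $\1$ while each chord pair $v_iv_{i+2}$ is a non-edge; in particular $v_2$ and $v_4$, though sharing the primes $p_1$ and $p_5$, are non-adjacent because both lie in $G\sm H$. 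This produces an induced $C_5$ and shows $\1$ is not perfect.

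For case~(I) I would construct analogous $5$-cycles, using either a prime of $|G|$ not dividing $|H|$ (when $s<r$) or an element of order $p_i^{\a_i}$ with $\a_i\geq 2$ exceeding the exponent of $p_i$ in $|H|$, to play the role that $p_5$ played in case~(II). The main obstacle will be case~(I): with only four primes available, the five edges together with five forbidden chords leave little slack, and the construction must be adjusted in each subcase according to how $|H|$ fails to equal $|G|$; each subcase then reduces to an elementary but careful verification of ten adjacency conditions, following the template established in case~(II).
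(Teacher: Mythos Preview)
Your overall strategy is the same as the paper's: the converse comes from Theorems~\ref{t:4primes} and~\ref{9.03} via the Strong Perfect Graph Theorem, and for necessity one exhibits an induced $C_5$ in each excluded configuration. The paper carries out necessity with the same style of explicit order assignments that you propose.

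There is, however, a genuine gap in your treatment of case~(II). Your $5$-cycle depends on $v_2,v_4\in G\sm H$, and this in turn forces $p_5\nmid|H|$: if $p_5\mid|H|$, then $p_1p_2p_5$ and $p_1p_4p_5$ both divide $|H|$, so $v_2,v_4\in H$ and become adjacent (they share $p_1$), destroying the induced $C_5$. Your hypothesis ``$p_1,p_2,p_3,p_4\mid|H|$ and $p_5\mid|G|$'' does not exclude $p_5\mid|H|$; indeed when $s\geq 5$ and every prime divisor of $|G|$ already divides $|H|$ (for example $H=G$), no admissible $p_5$ exists. The paper avoids this by splitting case~(II) into two subcases. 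When $s\geq 5$ it uses five primes dividing $|H|$ and takes elements of orders $p_1p_2,\ p_1p_3,\ p_3p_4,\ p_4p_5,\ p_2p_5$, all lying in $H$; this is a clean induced $C_5$ with no need to leave $H$. When $s=4$ it uses a construction of the same flavour as yours, now legitimately choosing $p_5$ as the fifth prime of $|G|$ not dividing $|H|$. You should make the same split.

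For case~(I) your sketch is reasonable and parallels the paper's argument: the paper likewise replaces the missing ``external'' prime $p_5$ by a power $p_1^{\a_1}$ whose exponent exceeds that of $p_1$ in $|H|$, and verifies the ten adjacency conditions as you outline.
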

\begin{proof}
Assume that $\1$ is a perfect graph. Let $h_r$ be the number of distinct prime divisors of $|H|.$
To complete the proof, it is enough to prove that $\1$ contains an induced cycle of odd order with length greater than or equal to 5 in the following cases:
 
(1) $r \geq 5$ and $h_r \geq 5;$

(2) $r \geq 5$ and $h_r=4;$
 
(3) $H\neq G$, $r = 4 \t{~with~} \a_j \geq 2 $ for some $j$ and $h_r=4.$

(1) Suppose $h_r \geq 5.$ Without loss of generality, one can assume that $p_i \mid |H|$ for $1 \leq i \leq 5.$ Let $S=\{u,v,w,x,y \}\s H$ such that $|u|=p_1p_2,$ $|v|=p_1p_3,$ $|w|=p_3p_4,$ $|x|=p_4p_5$ and $|y|=p_2p_5.$ Then $\langle S\rangle=C_5$ in $\1.$ 

(2) Suppose $r \geq 5$ and $h_r = 4.$ Without loss of generality, one can assume that $p_i \mid |H|$ for $1 \leq i \leq 4.$ Let $S=\{u,v,w,x,y \}\s G$ such that $|u|=p_1,$ $|v|=p_2p_4,$ $|w|=p_3p_4,$ $|x|=p_1p_2p_5$ and $|y|=p_1p_3p_5.$ Then $\langle S\rangle=C_5$ in $\1.$ 

(3) Suppose $r = 4 \t{~and~} \a_j \geq 2 $ for some $j$ and $h_r=4.$ Since $H \neq G,$ we have $\beta_i<\a_i$ for some $i.$ Without loss of generality, assume that $p_i=p_1$ and $\beta_1 < \a_1 .$ Let $S=\{u,v,w,x,y \}\s G$ such that $|u|=p_1,$ $|v|=p_2p_4,$ $|w|=p_3p_4,$ $|x|=p_2p_1^{\a_1}$ and $|y|=p_3p_1^{\a_1}.$ Then $\langle S\rangle=C_5$ in $\1.$ 

The converse follows from Theorems~\ref{t:4primes} and \ref{9.03}.	
\end{proof}

\section{Nilpotent groups}
In this section, we will see that extending our class of groups from cyclic to
nilpotent groups (which predominate in the enumeration of finite groups) gives
no further generalized non-coprime graphs. So all the detailed results about
generalized non-coprime graphs of cyclic groups extend to nilpotent groups.

\begin{theorem}
Let $G$ be a nilpotent group of order $n$, and $H$ a subgroup of $G$ of
order~$m$. Then $\Gamma_{G,H}\cong\Gamma_{\mathbb{Z}_n,\mathbb{Z}_m}$.
\end{theorem}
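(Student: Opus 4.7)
The plan is to exploit the fact that in a finite nilpotent group $G$, both $G$ and every subgroup $H$ decompose as direct products of their Sylow subgroups, and then show that the generalized non-coprime graph depends only on a small amount of numerical data shared with the cyclic group of the same order.

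First I would set up the decompositions. Writing $n = p_1^{\a_1}\cdots p_r^{\a_r}$, nilpotency gives $G = G_1 \times \cdots \times G_r$ with $G_i$ the Sylow $p_i$-subgroup, and since every subgroup of a nilpotent group is itself nilpotent and its Sylow subgroups can only meet the $G_i$, one has $H = H_1 \times \cdots \times H_r$ with $H_i = H \cap G_i$ the Sylow $p_i$-subgroup of $H$. For an element $a = (a_1,\ldots,a_r) \in G$ the order is $\prod_i |a_i|$, so the set $\T_a$ of primes dividing $|a|$ equals the \emph{support} $I(a) = \{i : a_i \neq e_i\}$. The crucial observation is that adjacency in $\Gamma_{G,H}$ depends only on two pieces of data: the support $I(a)$ and the indicator $\chi_H(a)$, since $a \sim b$ iff $I(a) \cap I(b) \neq \emptyset$ and at least one of $\chi_H(a), \chi_H(b)$ equals $1$.

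Next I would carry out the count. For any nonempty $S \subseteq \{1,\ldots,r\}$, the number of elements of $G$ with support exactly $S$ is $\prod_{i \in S}(|G_i|-1)$; the number of such elements lying in $H$ is $\prod_{i \in S}(|H_i|-1)$; and the number lying in $G \setminus H$ is the difference. For the cyclic case, $\mathbb{Z}_n \cong \mathbb{Z}_{p_1^{\a_1}} \times \cdots \times \mathbb{Z}_{p_r^{\a_r}}$, so the Sylow $p_i$-subgroup of $\mathbb{Z}_n$ has order $|G_i|$. Moreover $\mathbb{Z}_m$ is the unique subgroup of order $m$, and its Sylow $p_i$-subgroup has order equal to the $p_i$-part of $m = \prod_i |H_i|$, which is exactly $|H_i|$. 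So the counts of elements in $\mathbb{Z}_m$ (resp.\ $\mathbb{Z}_n \setminus \mathbb{Z}_m$) with a given support $S$ match the corresponding counts in $H$ (resp.\ $G \setminus H$).

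Finally I would assemble the isomorphism: choose any bijection $\phi : G \to \mathbb{Z}_n$ with $\phi(e) = 0$, $\phi(H) = \mathbb{Z}_m$, and $I(\phi(a)) = I(a)$ for all $a$, which exists by the matching counts above. Since adjacency in both graphs is decided purely by supports and $H$-membership (and both of these are preserved by $\phi$), the map $\phi$ is the desired graph isomorphism $\Gamma_{G,H} \cong \Gamma_{\mathbb{Z}_n,\mathbb{Z}_m}$. The only mildly delicate point is the product decomposition $H = \prod_i (H \cap G_i)$, which follows because the $p_i$-component of any element of $H$ is a power of that element and therefore lies in $H$; the rest of the argument is purely bookkeeping.
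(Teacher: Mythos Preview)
Your argument is correct. Both proofs rest on the same foundation—the Sylow decomposition $G=\prod G_i$, $H=\prod(H\cap G_i)$ for a nilpotent group, together with the observation that adjacency in $\Gamma_{G,H}$ is determined by the set of primes dividing $|a|$ and by whether $a\in H$—but the packaging differs. The paper introduces an auxiliary object, the \emph{tagged coprime graph} $\tc(G,H)$ (the coprime graph on $G$ with a loop at $e$ and the set $H$ recorded as a tag), proves that $\tc$ turns direct products of coprime-order groups into categorical products of graphs, observes that $\tc$ of a $p$-group depends only on $|G|$ and $|H|$, and then recovers $\Gamma_{G,H}$ from $\tc(G,H)$ by a fixed recipe. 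You bypass this gadget entirely: you partition $G\setminus\{e\}$ by the pair $(\text{support},\ \chi_H)$, compute the class sizes $\prod_{i\in S}(|G_i|-1)$ and $\prod_{i\in S}(|H_i|-1)$ directly, check that these numbers coincide with the cyclic case because the Sylow orders match, and build the isomorphism class by class. Your route is shorter and more elementary; the paper's route is more structural and isolates a reusable product lemma. Either way the key content is the same, and your justification of $H=\prod_i(H\cap G_i)$ via $p_i$-components being powers is exactly what is needed.
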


The difficulty in the theorem is in keeping track of the subgroup $H$; we do
not know how to recognise the vertices in $H$ in the graph $\Gamma_{G,H}$ for
arbitrary $G$ and $H$. For this purpose, we introduce a new gadget $\tc(G,H)$,
the \emph{tagged coprime graph} of $G$ and $H$. This consists of a graph
$\Gamma$ and a distinguished subset $S$ of vertices, and is constructed as
follows:
\begin{itemize}
\item The vertex set of the graph is $G$.
\item We join vertices $x$ and $y$ if their orders are coprime (this includes
putting a loop at the identity vertex).
\item The tagged set is $H$.
\end{itemize}

Now the proof of the theorem consists of a number of lemmas.

\begin{lemma}
We can recover the generalized non-coprime graph $\Gamma_{G,H}$ from \linebreak $\tc(G,H)$ by the following procedure:
\begin{itemize}
\item Delete the identity.
\item Add each edge joining a pair of vertices which are not tagged, if this
edge is not already present.
\item Ignore the tags.
\item Take the complementary graph.
\end{itemize}
\label{l1}
\end{lemma}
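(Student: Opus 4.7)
The plan is a direct verification: I will track a generic pair of distinct non-identity vertices $x,y$ through the four-step procedure and check that the resulting adjacency condition matches the definition of $\Gamma_{G,H}$. Let $\Gamma^{(i)}$ denote the graph produced after step~$i$, with $\Gamma^{(0)}=\tc(G,H)$ (viewed as a vertex-tagged graph).

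After step~1 (delete the identity), $\Gamma^{(1)}$ has vertex set $G\sm\{e\}$ with $\{x,y\}$ an edge iff $\gcd(|x|,|y|)=1$, and tagged set $H\sm\{e\}$; this is just the restriction of $\tc(G,H)$ to non-identity vertices, and the loop at $e$ disappears with $e$ itself. Step~2 adds, between every pair of untagged vertices (that is, pairs $\{x,y\}\se G\sm H$), an edge if none is present. Thus $\{x,y\}$ is an edge of $\Gamma^{(2)}$ iff either $\gcd(|x|,|y|)=1$ or $\{x,y\}\se G\sm H$; equivalently, $\{x,y\}$ is a \emph{non-edge} of $\Gamma^{(2)}$ iff $\gcd(|x|,|y|)\ne 1$ and at least one of $x,y$ lies in $H$. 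Step~3 forgets the tags without altering edges, so $\Gamma^{(3)}$ has the same edge set as $\Gamma^{(2)}$ on vertex set $G\sm\{e\}$.

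Finally, step~4 replaces $\Gamma^{(3)}$ by its complement, swapping edges and non-edges. Hence $\{x,y\}$ is an edge of $\Gamma^{(4)}$ iff it was a non-edge of $\Gamma^{(3)}$, which by the previous paragraph happens exactly when $\gcd(|x|,|y|)\ne 1$ and at least one of $x,y$ belongs to $H$. This is precisely the adjacency rule defining $\Gamma_{G,H}$, so $\Gamma^{(4)}=\Gamma_{G,H}$, completing the proof.

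The argument is pure bookkeeping, so there is no genuine obstacle. The only point demanding care is step~2, where one must read ``a pair of vertices which are not tagged'' as meaning \emph{both} endpoints are untagged (not just one); a mis-parsing here would give the wrong edge set in $\Gamma^{(2)}$ and hence the wrong complement in step~4. Once this is fixed, the conjunction ``$\gcd(|x|,|y|)\ne 1$ and ($x\in H$ or $y\in H$)'' in the target adjacency drops out of De~Morgan's law applied to the disjunction describing edges of $\Gamma^{(2)}$.
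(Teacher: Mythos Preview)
Your proof is correct and follows exactly the direct verification the paper intends; the paper's own proof is simply the sentence ``This is clear.'' You have just written out the bookkeeping that the authors deemed obvious.
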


This is clear.

\begin{lemma}
Let $|G|=p^n$ and $|H|=p^m$ with $m\le n$. Then $\tc(G,H)$ consists of a star
on $1+(p^n-1)$ vertices, the centre of which carries
a loop, with $p^m$ tagged vertices including the loop.
\label{l2}
\end{lemma}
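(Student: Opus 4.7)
The plan is to unpack the definitions of $\tc(G,H)$ directly and use the fact that every element of a $p$-group has order a power of $p$. This lemma is essentially a definition check, so the proof will be quite short; the main obstacle, such as it is, is simply to note all the features of the star (centre, loop, tagged vertices) without missing any.

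First I would observe that, since $|G|=p^n$, every element $x\in G$ has order $|x|=p^k$ for some $0\le k\le n$, with $|x|=1$ if and only if $x=e$. Consequently, for any two distinct non-identity vertices $x,y\in G$ we have $p\mid |x|$ and $p\mid |y|$, so $\gcd(|x|,|y|)\ge p>1$; by the definition of $\tc(G,H)$, there is no edge between $x$ and $y$. On the other hand, $|e|=1$ is coprime to every integer, so $e$ is joined to every other vertex, and $\gcd(|e|,|e|)=1$ places a loop at $e$.

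These two observations together show that the graph underlying $\tc(G,H)$ is precisely the star on $1+(p^n-1)$ vertices with $e$ as the centre, carrying an additional loop at $e$. For the tagging statement, the tagged set is $H$ by definition, so it contains exactly $|H|=p^m$ vertices; since $e\in H$, the centre of the star (the vertex carrying the loop) is among the tagged vertices, completing the description.

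Finally, I would record, for use in the following lemmas leading to the main theorem, that the isomorphism type of $\tc(G,H)$ together with its tagged subset depends only on $p$, $n$, and $m$, not on the particular $p$-group $G$ or subgroup $H$; this is the point of the lemma, as it is what will eventually let us replace an arbitrary nilpotent group by a cyclic group of the same order through the Sylow decomposition.
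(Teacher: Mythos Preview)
Your proof is correct and follows exactly the same approach as the paper, which simply notes that the coprime pairs form a star with centre the identity, the tagged vertices include the identity, and the structure depends only on $|G|$ and $|H|$. You have just unpacked these observations in more detail.
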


For the coprime pairs form a star whose centre is the identity; the tagged
vertices include the identity. Note that the structure of this tagged graph
depends only on $|G|$ and $|H|$.

\medskip

The \emph{categorical product} $\Gamma\times\Delta$ of two graphs $\Gamma$
and $\Delta$ has vertex set the cartesian product of the vertex sets of
$\Gamma$ and $\Delta$, with $(v,x)$ joined to $(w,y)$ if and only if 
$v$ is joined to $w$ in $\Gamma$ and $x$ is joined to $y$ in $\Delta$.

Now we define the product of tagged coprime graphs $\tc(G_1,H_1)$ and
$\tc(G_2,H_2)$ to be the pair $(G_1\times G_2,H_1\times H_2)$, where the
first $\times$ is the categorical product of graphs and the second is the
Cartesian product of sets.

\begin{lemma}
Let $G_1$ and $G_2$ be groups with $\gcd(|G_1|,|G_2|)=1$, and $H_i$ a
subgroup of $G_i$ for $i=1,2$. Then
\[\tc(G_1\times G_2,H_1\times H_2)=\tc(G_1,H_1)\times\tc(G_2,H_2).\]
\label{l3}
\end{lemma}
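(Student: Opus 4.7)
The plan is to verify equality of the two tagged graphs component by component: vertex set, tagged set, and edge set. The vertex sets on both sides are literally $G_1\times G_2$ (the underlying set of the direct product on the left is the same as the Cartesian product on the right), and the tagged sets on both sides are declared to be $H_1\times H_2$, so these parts are immediate from the definitions. The entire content of the lemma lies in checking that two vertices are joined on the left if and only if they are joined on the right.

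For the edge condition, I would first establish the basic order-arithmetic fact that powers the argument: since $\gcd(|G_1|,|G_2|)=1$ and the order of $a_i\in G_i$ divides $|G_i|$, the integers $|a_1|$ and $|a_2|$ are coprime, and therefore the order of $(a_1,a_2)\in G_1\times G_2$ equals $\mathrm{lcm}(|a_1|,|a_2|)=|a_1|\cdot|a_2|$ (remembering that we also want this to handle loops at the identity correctly, where orders are $1$). Using this twice, for $(a_1,a_2)$ and $(b_1,b_2)$, the adjacency condition on the left becomes
\[
\gcd\bigl(|a_1|\,|a_2|,\;|b_1|\,|b_2|\bigr)=1.
\]

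The second step is to split this gcd using coprimality: because $|a_i|$ and $|b_j|$ are coprime whenever $i\neq j$ (they divide $|G_i|$ and $|G_j|$ respectively), the gcd factors as
\[
\gcd\bigl(|a_1|\,|a_2|,\;|b_1|\,|b_2|\bigr)=\gcd(|a_1|,|b_1|)\cdot\gcd(|a_2|,|b_2|).
\]
Since each factor is a positive integer, the product equals $1$ if and only if both factors equal $1$, which is exactly the condition that $a_1$ is joined to $b_1$ in $\tc(G_1,H_1)$ and $a_2$ is joined to $b_2$ in $\tc(G_2,H_2)$. By the definition of the categorical product, this is the adjacency condition on the right-hand side. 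I would remark briefly that the loops are also handled correctly, since $(a_1,a_2)=(e_1,e_2)$ iff $a_1=e_1$ and $a_2=e_2$, so a loop sits on the product vertex iff loops sit on both factor vertices.

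The argument has no real obstacle; the whole content is the mild number-theoretic observation that coprimality of $|G_1|$ and $|G_2|$ lets one factor both the element order and the gcd. If anything is worth being careful about, it is making sure the loop at the identity (which is how the construction encodes that $e$ has order coprime to everything) is preserved by the categorical product convention, and that "tagged" status on the product is defined exactly as $H_1\times H_2$ so that this part of the equality is true by stipulation rather than requiring argument.
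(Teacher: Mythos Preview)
Your proof is correct and follows essentially the same approach as the paper: both arguments note that the vertex and tagged sets coincide by definition, and then verify the edge condition via the coprimality of $|G_1|$ and $|G_2|$, which forces cross-terms like $\gcd(|a_1|,|b_2|)$ to vanish. Your gcd factorisation $\gcd(|a_1||a_2|,|b_1||b_2|)=\gcd(|a_1|,|b_1|)\cdot\gcd(|a_2|,|b_2|)$ handles both implications simultaneously, whereas the paper checks one direction and says the converse is similar, but this is a cosmetic rather than a substantive difference.
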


\begin{proof}
The graph on the right has the same vertex set and the same set of
tagged vertices as the graph on the left. We have to check edges.

Suppose that $\{g,g'\}$ is an edge of $\tc(G_1,H_1)$ and $\{x,x'\}$ an
edge of $\tc(G_2,H_2)$. Now $\gcd(|g|,|x|)=1$ since $g$ and $x$ belong to
groups of coprime order. Similarly $\gcd(|g'|,|x'|)=1$. Also, by 
definition of the tagged coprime graph, $\gcd(|g|,|g'|)=\gcd(|x|,|x'|)=1$.
Hence $\gcd(|(g,x)|,|(g',x')|)=1$, and so $(g,x)$ and $(g',x')$ are joined in
$\tc(G_1,H_1)\times\tc(G_2,H_2)$. The converse is proved similarly. Thus
the graph in the pair $\tc(G_1,H_1)\times\tc(G_2,H_2)$ is the categorical
product of those in $\tc(G_1,H_1)$ and $\tc(G_2,H_2)$.
\end{proof}

This result extends to products of any number of groups of pairwise
coprime order.

\medskip

Now we can prove the theorem. Let $G$ be a nilpotent group. Then $G$ is the
direct product of its Sylow subgroups $G_p$ (as $p$ runs over the prime
divisors of $|G|$); these groups have pairwise coprime order.
Any subgroup $H$ of $G$ is also nilpotent,  so is the direct product of its
intersections with the Sylow subgroups of $G$.

Now $\Gamma_{G,H}$ consists of a complete graph of order $p^n$ with
the edges of  a complete subgraph of order $p^m$ deleted, and then isolated
vertices removed. Clearly this is isomorphic 
to $\Gamma_{\mathbb{Z}_{p^n},\mathbb{Z}_{p^m}}$.

Next, $\tc(G_p,G_p\cap H)$ is clearly determined by $\Gamma_{G_p,G_p\cap H}$
for each prime $p$; then Lemma~\ref{l3} determines $\tc(G,H)$, and then
Lemma~\ref{l1} determines $\Gamma_{G,H}$. Since also
\[\mathbb{Z}_{|G|}=\prod \mathbb{Z}_{p_i^{n_i}}\]
where $|G|=\prod p_i^{n_i}$, we see that the same process determines
$\Gamma_{\mathbb{Z}_{|G|},\mathbb{Z}_{|H|}}$. The theorem is proved.

\begin{problem} Is it possible to identify, up to graph isomorphism, the
vertices corresponding to elements of $H$ in the generalized non-coprime
graph $\Gamma_{G,H}$?
\end{problem}

\begin{remark}
 We note that the theorem does not extend beyond nilpotent
groups. The smallest non-nilpotent group is the symmetric group $S_3$, whose
generalized non-coprime graphs are different from those of $\mathbb{Z}_6$.
\end{remark}

\section{EPPO groups}
At the other extreme from nilpotent groups are the \emph{EPPO groups}
(sometimes called CP-groups) in which every element has prime power order.
After pioneering work by Higman in the 1950s and Suzuki in the 1960s, a
complete description was given by Brandl~\cite{brandl} in 1981; a modern
and detailed account can be found in the survey paper \cite{cm}.

Let $\Omega_p(G)$ be the set of non-identity elements of $p$-power order
in $G$,  where $p$ is a prime divisor of $|G|$. If $G$ is an EPPO group,
then the non-coprime graph $\Gamma_G$ is the disjoint union of the subgraphs
on the sets $\Omega_p(G)$. If we let $X(n,m)$ denote the complete graph
on $n$ vertices with the edges of a complete graph on $m$ vertices removed,
then the following is true:

\begin{theorem}
Let $G$ be an EPPO group, and $H$ a subgroup of $G$. Then $\Gamma_{G,H}$
is the disjoint union of graphs $X(n_p,m_p)$, with $n_p=|\Omega_p(G)|$ and
$m_p=|\Omega_p(G)\setminus\Omega_p(H)|$, where $p$ runs over the prime
divisors of $|G|$.
\end{theorem}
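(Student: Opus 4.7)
The plan is to use the EPPO hypothesis to partition the vertex set $V(\Gamma_{G,H}) = G \setminus \{e\}$ according to the unique prime dividing the order of each element, and then read off the adjacency structure within and between the resulting classes. Write $V_p := \Omega_p(G)$ for the set of non-identity $p$-power-order elements. Since $G$ is EPPO, every non-identity element has order $p^k$ for exactly one prime $p$, so $G \setminus \{e\} = \bigsqcup_p V_p$ as $p$ ranges over the prime divisors of $|G|$.

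The first step is to verify that there are no edges of $\Gamma_{G,H}$ between distinct classes: if $a \in V_p$ and $b \in V_q$ with $p \neq q$, then $\gcd(|a|,|b|) = 1$, so the adjacency condition fails regardless of whether $a$ or $b$ lies in $H$. Hence $\Gamma_{G,H}$ is the vertex-disjoint union of its induced subgraphs on each $V_p$, and it suffices to identify each such piece with $X(n_p, m_p)$.

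The second step handles a single class $V_p$. For distinct $a,b \in V_p$, the orders $|a|, |b|$ are both powers of $p$, so $\gcd(|a|,|b|) \geq p > 1$ automatically. By the definition of $\Gamma_{G,H}$, the pair $\{a,b\}$ is then an edge precisely when at least one of $a,b$ lies in $H$, which on $V_p$ is the same as lying in $\Omega_p(H) = V_p \cap H$. Equivalently, $\{a,b\}$ is a \emph{non-edge} iff both endpoints lie in $V_p \setminus \Omega_p(H)$, a set of cardinality $m_p$. Thus the induced subgraph on $V_p$ is obtained from $K_{n_p}$ by deleting exactly the edges of the complete subgraph on these $m_p$ vertices, which is by definition $X(n_p,m_p)$. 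Taking the disjoint union over all $p$ gives the desired decomposition.

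No genuine obstacle arises here; once the EPPO property is invoked, the whole argument reduces to inspecting the adjacency rule against coprimality of prime powers. The one point worth flagging is the identification $V_p \cap H = \Omega_p(H)$, which is immediate from the definitions: an element of $H$ of $p$-power order is the same thing as an element of $p$-power order that happens to lie in $H$. Everything else is bookkeeping about the partition.
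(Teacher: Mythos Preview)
Your argument is correct and is exactly the direct verification the paper has in mind; in fact the paper states this theorem without proof, having already observed in the sentence preceding it that $\Gamma_G$ decomposes as the disjoint union of the induced subgraphs on the sets $\Omega_p(G)$, which is precisely your first step. The remaining identification of each piece with $X(n_p,m_p)$ is just the definition of the adjacency relation, as you carry out.
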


As a corollary, we see that $\Gamma_{G,H}$ is connected (apart from isolated
vertices) if and only if $H$ contains $\Omega_p(G)$ for all but one
prime divisor of $|G|$.

\section{The Gruenberg--Kegel graph}

We conclude with a connection between the non-coprime graph of a group $G$ and
a smaller and well-studied graph, the \emph{Gruenberg--Kegel graph}, or GK
graph for short (sometimes called the prime graph). This was introduced by
Gruenberg and Kegel in their study of the integral group ring of a finite
group. The vertices of the GK graph are the prime divisors of $|G|$; there
is an edge joining primes $p$ and $q$ if $G$ contains an element of order $pq$.

Gruenberg and Kegel showed that connectedness of this graph is equivalent to
indecomposability of the augmentation ideal of the group ring of $G$. They
proved a structure theorem for groups whose GK graph is disconnected. They
did not publish this result, which appeared first in a paper by Gruenberg's
student Williams~\cite{williams}, and was refined by subsequent authors. For
further discussion see~\cite{cm}.

The GK graph shows clearly the contrast between nilpotent and EPPO groups. The
GK graph of any nilpotent group is complete; but a group is EPPO if and only if
its GK graph is null.

The connection with our current topic is the following.

\begin{theorem}
For a finite group $G$, the non-coprime graph $\Gamma_G$ is connected if and
only if the Gruenberg--Kegel graph is connected.
\end{theorem}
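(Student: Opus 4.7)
The plan is to partition the vertices of $\Gamma_G$ into cliques indexed by the primes dividing $|G|$, and then show that the pattern of pairwise intersections among these cliques is literally the Gruenberg--Kegel graph. For each prime $p\mid|G|$, let
\[V_p=\{x\in G\setminus\{e\}: p\mid|x|\}.\]
By Cauchy's theorem each $V_p$ is non-empty. First I would observe that $V_p$ is a clique in $\Gamma_G$: any two elements of $V_p$ have orders divisible by $p$, so their gcd is at least $p>1$ and they are adjacent. Moreover every non-identity element of $G$ has some prime divisor in its order, so $V(\Gamma_G)=\bigcup_p V_p$.

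Next I would establish the bridge to the GK graph. The key observation is that for distinct primes $p,q\mid|G|$,
\[V_p\cap V_q\neq\emptyset\iff G\text{ contains an element of order }pq\iff p\sim q\text{ in the GK graph.}\]
For the non-trivial direction of the first equivalence, note that if $x\in V_p\cap V_q$ then $pq\mid|x|$, so the power $x^{|x|/(pq)}$ has order exactly $pq$; conversely any element of order $pq$ lies in $V_p\cap V_q$.

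Now I would deduce the theorem by a standard clique-cover argument. Since $\Gamma_G$ is the union of the cliques $\{V_p\}$, two vertices $x,y$ lie in the same component of $\Gamma_G$ if and only if there is a sequence of primes $p_0,p_1,\dots,p_k$ with $x\in V_{p_0}$, $y\in V_{p_k}$, and $V_{p_i}\cap V_{p_{i+1}}\neq\emptyset$ for each $i$; because moving inside a clique is free and moving between cliques requires only a shared vertex. Thus $\Gamma_G$ is connected if and only if the auxiliary graph on the primes $p\mid|G|$ with edges $\{p,q\}$ when $V_p\cap V_q\neq\emptyset$ is connected, and by the equivalence above this auxiliary graph is exactly the GK graph. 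Both directions of the theorem follow.

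The argument is essentially bookkeeping, so there is no serious obstacle; the only point requiring slight care is the passage from an element of order divisible by $pq$ to an element of order exactly $pq$, handled by taking an appropriate power in a cyclic subgroup.
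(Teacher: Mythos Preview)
Your argument is correct. The clique-cover framing is sound: the edges of $\Gamma_G$ are precisely those pairs lying in some common $V_p$ (since $\gcd(|x|,|y|)>1$ iff some prime divides both orders), so $\Gamma_G$ is exactly the edge-union of the cliques $V_p$, and connectedness of such a union is equivalent to connectedness of the intersection graph of the cliques, which you correctly identify with the GK graph.

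The paper's proof uses the same ingredients but organizes them as explicit path-lifting rather than a structural decomposition. For one direction it takes a path $p=q_0,q_1,\dots,q_k=r$ in the GK graph and lifts each edge $\{q_i,q_{i+1}\}$ to a length-two path through an element of order $q_iq_{i+1}$ in $\Gamma_G$, then notes that every vertex is adjacent to an element of prime order. For the converse it walks along a path in $\Gamma_G$ between elements of prime orders $p$ and $r$, and at each step where the ``current prime'' changes it extracts a GK-edge by taking a power of the intermediate vertex. Your approach packages both directions at once via the clique intersection graph, which is cleaner and avoids the asymmetric case analysis; the paper's version is more hands-on but makes the actual paths visible. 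The underlying observation---that an element of order divisible by $pq$ yields an element of order exactly $pq$ by taking a power---is the same in both.
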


\begin{proof}
Suppose first that the GK graph is connected. Given any two prime divisors of
$|G|$, say $p$ and $r$, there is a path joining them in the GK-graph. This
lifts to a path of twice the length between elements of orders $p$ and $r$
in $\Gamma_G$, where we replace the edge $\{p,q\}$ by elements of orders
$p$, $pq$ and $q$. Hence the elements of prime order are contained in a single
connected component of $\Gamma_G$. But any non-identity element of $G$ is
joined to an element of prime order in $\Gamma_G$, namely some power of itself.

Conversely, suppose that $\Gamma_G$ is connected. Let $g$ and $h$ be elements
of prime orders $p$ and $r$ in $G$, and choose a path joining them in
$\Gamma_G$, say $(g_0,g_1,\ldots,g_d)$. Let $i$ be minimal such that $p$ does
not divide $|g_i|$, and let $q$ be a prime dividing $|g_{i-1}|$ and
$|g_i|$. Then some power of $g_{i-1}$ has order $pq$, so $p$ and $q$ are
joined in the GK graph. Continuing this process, we find a path from $p$ to $r$
in the GK graph.
\end{proof}

In this connection we mention a very similar result, given in
\cite[Theorem 9.2]{c:survey}. The \emph{commuting graph} of a finite group
has the non-central elements as vertices, two vertices joined if they commute.

\begin{theorem}
Let $G$ be a group whose centre contains just the identity. Then the
commuting graph of $G$ is connected if and only if its Gruenberg--Kegel graph
is connected.
\end{theorem}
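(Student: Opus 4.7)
The plan is to mirror the structure of the preceding theorem on the non-coprime graph, with one direction being routine and the other requiring the centrelessness hypothesis in an essential way.

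For the easy direction, suppose the commuting graph is connected. Given any two prime divisors $p,r$ of $|G|$, choose elements $g$ of order $p$ and $h$ of order $r$; since $Z(G)=\{e\}$ they are non-central, hence vertices. Take a commuting-path $g=g_0,g_1,\ldots,g_d=h$. Each consecutive pair $g_{i-1},g_i$ generates an abelian subgroup, which therefore contains an element of order $\mathrm{lcm}(|g_{i-1}|,|g_i|)$; hence every prime dividing $|g_{i-1}|$ is joined in the Gruenberg--Kegel graph to every prime dividing $|g_i|$. Concatenating these edges yields a path from $p$ to $r$ in the GK graph.

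For the converse, suppose the Gruenberg--Kegel graph is connected. My first step is a reduction: every non-identity $x\in G$ is adjacent in the commuting graph to a non-identity element of prime order, namely any non-identity element of prime order in $\langle x\rangle$, which is automatically non-central under the hypothesis $Z(G)=\{e\}$. So it is enough to show that all elements of prime order in $G$ lie in a single connected component. My second step uses the GK-hypothesis as a bridge: whenever primes $p\sim q$ in the Gruenberg--Kegel graph, there is an element $y$ of order $pq$, and its powers $y^q$ (of order $p$) and $y^p$ (of order $q$) commute and are non-central, giving a commuting edge between elements of orders $p$ and $q$. So once all elements of order $p$ are shown to sit in a single component $C_p$, these ``cross-prime'' edges chain the $C_p$ together along a spanning tree of the GK graph.

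The main obstacle is the remaining claim: for each prime $p\mid|G|$, all elements of order $p$ lie in one component of the commuting graph. Within a fixed Sylow $p$-subgroup $P$ this is immediate, since $Z(P)\ne\{e\}$ and any non-identity element of $Z(P)$ is a common commuting-neighbour of all of $P\setminus\{e\}$. The hard part is to connect the centres of distinct (hence conjugate) Sylow $p$-subgroups, and this is where $Z(G)=\{e\}$ is genuinely used: in a group with non-trivial centre any central element is universally connected, but the structural bridge between Sylow centres can fail without some additional input. My plan would be to conjugate: if $Q=P^g$ then $Z(Q)=Z(P)^g$, and I would try to produce a commuting path from $z\in Z(P)$ to $z^g$ by exhibiting an element centralising both (for example by examining $C_G(z)\cap C_G(z^g)$ and using the centreless hypothesis to rule out the degenerate possibility that this intersection is trivial while $z$ and $z^g$ are non-adjacent). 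Failing a direct elementary argument, I would fall back on invoking the Williams--Kondratiev structure theorem for groups with disconnected Gruenberg--Kegel graph, or the Morgan--Parker style results on commuting graphs of centreless groups, which together imply precisely the required single-component-per-prime statement.
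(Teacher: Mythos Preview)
The paper does not actually prove this theorem: it is quoted without proof as a known result, with a reference to \cite[Theorem~9.2]{c:survey}. So there is no ``paper's own proof'' to compare against; the honest comparison is between your attempted direct argument and the literature it ultimately rests on.

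Your easy direction (commuting graph connected $\Rightarrow$ GK graph connected) is correct and standard.

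In the hard direction you have correctly located the genuine difficulty: showing that, for each prime $p$, all elements of order $p$ lie in a single commuting-graph component. Your within-Sylow argument via $Z(P)$ is fine, but the proposed bridge between conjugate Sylow subgroups does not work. Centrelessness of $G$ gives no control over $C_G(z)\cap C_G(z^g)$; there is no elementary reason this intersection should be non-trivial, and ``ruling out the degenerate possibility'' is exactly the content of the theorem, not a consequence of $Z(G)=\{e\}$ alone. So this step is a real gap, not a routine detail.

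Your fallback is the right instinct but is essentially circular. The Morgan--Parker theorem states precisely that, for centreless $G$, the connected components of the commuting graph are the preimages of the GK-components under $g\mapsto\pi(|g|)$; invoking it is not filling a gap in your proof but citing a result that already contains the theorem you are proving. Moreover, that result (and the Williams/Kondrat'ev line you mention) relies on the classification of finite simple groups, so no short elementary argument of the kind you sketch is expected to exist. In effect, your proposal in the hard direction reduces to what the paper itself does: cite the result.
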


\begin{corollary}
Let $G$ be a group whose centre contains just the identity. Then the
non-coprime graph of $G$ is connected if and only if the commuting graph is
connected.
\end{corollary}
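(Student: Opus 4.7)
The corollary is essentially immediate from the two preceding theorems, so my proposal is a short transitivity argument rather than a substantive proof. The plan is to use the Gruenberg--Kegel graph as a common pivot.

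First I would invoke the theorem just proved: for an arbitrary finite group $G$, the non-coprime graph $\Gamma_G$ is connected if and only if the GK graph of $G$ is connected. This equivalence makes no hypothesis on the centre, so it applies in the centreless case.

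Next I would invoke the theorem cited from \cite[Theorem 9.2]{c:survey}: for a group $G$ with trivial centre, the commuting graph of $G$ is connected if and only if the GK graph of $G$ is connected. This is exactly where the centreless hypothesis is needed, since without it the commuting graph need not even have all non-identity elements as vertices, and the equivalence can fail.

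Combining the two biconditionals, the connectedness of $\Gamma_G$ and the connectedness of the commuting graph are both equivalent to the connectedness of the GK graph, so they are equivalent to each other. There is no real obstacle here; the work has already been done in the two theorems, and the corollary is simply the composition of the two equivalences.
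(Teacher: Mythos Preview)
Your proposal is correct and matches the paper's intent: the corollary is stated immediately after the two theorems with no proof, precisely because it follows by the transitivity argument you describe, pivoting through connectedness of the Gruenberg--Kegel graph.
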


We have not been able to extend this result to the generalized non-coprime
graph.

\begin{problem}
Is there a ``relative GK graph'' defined by a group $G$ and subgroup $H$, whose
connectedness is equivalent to that of the generalized non-coprime graph
$\Gamma_{G,H}$?
\end{problem}

\section{Further comments and open problems}
We have generalized the non-coprime graph of a finite group $G$ by
introucing a subgroup $H$ as an extra parameter, where we keep only those
edges of the non-coprime graph which have at least one vertex in $H$.
This suggests several questions:

\begin{problem}
Take another graph defined on a group, for example the power graph,
commuting graph or non-generating graph, and generalize it as above by
introducing a subgroup $H$. What can be said about such graphs?
\end{problem}

\begin{problem}
Given $\Gamma_{G,H}$ as a graph, is it possible to identify the set of
vertices comprising the subgroup $H$ of $G$, at least up to automorphisms
of the graph?
\end{problem}

\section*{Conflict of interest}
 The authors declare that they have no conflict of interest.
\section*{Data Availability Statement }
 The authors have not used any data for the preparation of this manuscript.

\end{document}